\DeclareMathOperator{\sinc}{sinc}
\titleformat{\section}{\vskip10pt\large\bfseries}{\thesection.}{0.5em}{\centering\vspace{5pt}}
\titleformat{\subsection}{\vskip10pt\normalsize\bfseries}{\thesubsection.}{0.5em}{}
\newtheorem{theorem}{Theorem}[section]
\newtheorem{remark}[theorem]{Remark}
\theoremstyle{definition}
\def\R{{\mathbb R}}
\def\d{{\mathrm d}}
\numberwithin{equation}{section}
\begin{document}

\title[]{A second-order low-regularity correction of Lie splitting for the semilinear Klein--Gordon equation
}

\author[]{Buyang Li}
\address{\hspace*{-12pt}Buyang Li:
Department of Applied Mathematics, The Hong Kong Polytechnic University,
Hung Hom, Hong Kong. {\it E-mail address}: {\tt buyang.li@polyu.edu.hk} }

\author[]{Katharina Schratz}
\address{\hspace*{-12pt}Katharina Schratz and Franco Zivcovich:
Laboratoire Jacques-Louis Lions, Sorbonne Université,
Bureau : 16-26-315, 4 place Jussieu, Paris 5ème. \newline
{\it E-mail address}: {\tt katharina.schratz@sorbonne-universite.fr}\, and\,
{\tt franco.zivcovich@gmail.com} }

\author[]{Franco Zivcovich}


\subjclass[2010]{65M12, 65M15, 76D05}


\keywords{semilinear Klein--Gordon equation, wave equation,
energy space, low regularity, second order, error estimates}

\maketitle
\begin{abstract}\noindent
The numerical approximation of the semilinear Klein--Gordon equation in the $d$-dimensional space, with $d=1,2,3$, is studied by analyzing the consistency errors in approximating the solution. By discovering and utilizing a new cancellation structure in the semilinear Klein--Gordon equation, a low-regularity correction of the Lie splitting method is constructed, which can have second-order convergence in the energy space under the regularity condition $(u,\partial_tu)\in L^\infty(0,T;H^{1+\frac{d}{4}}\times H^{\frac{d}{4}})$, where $d=1,2,3$ denotes the dimension of space.
In one dimension, the proposed method is shown to have a convergence order arbitrarily close to $\frac53$ in the energy space for solutions in the same space, i.e. no additional regularity in the solution is required.
Rigorous error estimates are presented for a fully discrete spectral method with the proposed low-regularity time-stepping scheme.
Numerical examples are provided to support the theoretical analysis and to illustrate the performance of the proposed method in approximating both nonsmooth and smooth solutions of the semilinear Klein--Gordon equation.
\end{abstract}



\section{\bf Introduction}\label{sec:intr}


We consider the following initial-boundary value problem of the semilinear Klein--Gordon equation:
\begin{equation}
\label{pde}
\left \{
\begin{aligned}
&\partial_{tt} u - \Delta u = f(u) && \mbox{in}\,\,\,  \varOmega\times (0,T] ,\\
&u=0 && \mbox{on}\,\,\, \partial\varOmega\times (0,T] ,\\
&u|_{t=0}=u^0\,\,\,\mbox{and}\,\,\,
\partial_tu|_{t=0}=v^0 && \mbox{in}\,\,\, \varOmega ,
\end{aligned}
\right .
\end{equation}
in a rectangular domain $\Omega\subset\R^d$ under the homogeneous Dirichlet boundary condition, where $f:\R\rightarrow\R$ is a given nonlinear function. For example,
\eqref{pde} is often referred to as the sine--Gordon equation in the case $f(u)=\sin(u)$, which arises in many physical applications, such as magnetic-flux propagation in Josephson junctions, bloch-wall dynamics in magnetic crystals, propagation of dislocation in solid and liquid crystals, propagation of ultra-short optical pulses in two-level media; see \cite{BEM1971}.
Since the semilinear Klein--Gordon equation describes wave propagation with finite speed,
when the initial values $u_0$ and $v_0$ have compact supports,
problem \eqref{pde} can also be used to describe wave propagation in the whole space $\R^d$ by choosing a sufficiently large $\Omega$ (so that the wave does not reach the boundary up to time $T$).



The numerical approximation of semilinear Klein--Gordon equations in the form of \eqref{pde} has been extensively studied in computational mathematics. A large variety of numerical schemes for approximating the time dynamics of the semilinear Klein--Gordon equation has been proposed and analyzed, including trigonometric/exponential integrators that are based on the variation-of-constants formula (for example, see
\cite{BDH2021,GH06,HL1999,G15,WW2019}), splitting methods (for example, see \cite{BFS2022,BY2007,BDH2021,HO2016}), finite difference methods (such as the Crank–Nicolson and Runge--Kutta methods, see \cite{CLZ2021,HL2021,LLT2016,LS2020,LV2006,MK2011,QW2019}), and symplectic methods \cite{CHSS2020,CHL2008,HL00}.

The analyses in these articles (for example in \cite{BDH2021,HL1999,G15,WW2019}) have shown that for initial data $(u^0,v^0)$ in the physically natural energy space $H^1(\Omega)\times L^2(\Omega)$, so that the solution $(u,\partial_tu)$ is bounded in the energy space $H^1(\Omega)\times L^2(\Omega)$ uniformly for $t\in[0,T]$, the classical time-stepping methods such as splitting methods, Runge--Kutta methods, trigonometric integrators, and averaged exponential integrators, can approximate the solution $(u,\partial_tu)$ with second-order convergence in the weaker space $L^2(\Omega)\times H^{-1}(\Omega)$, but only with first-order convergence in the energy space $H^1(\Omega)\times L^2(\Omega)$ itself. Moreover, the second-order approximation to $(u,\partial_tu)$ in the energy space $H^1(\Omega)\times L^2(\Omega)$ generally requires the initial data to be in the stronger space  $H^2(\Omega)\times H^1(\Omega)$.

The only method which breaks this order barrier is the low-regularity integrator proposed in \cite{RS21}, which can have second-order convergence in the energy space $H^1(\Omega)\times L^2(\Omega)$ under the weaker regularity condition $(u^0,v^0)\in H^{\frac74}(\Omega)\times H^{\frac34}(\Omega)$; see \cite[Corollary 5.7]{RS21}. This low-regularity integrator is based on the reformulation of \eqref{pde} into the first-order equation
\begin{align}
\label{formula:rootrewriting}
i \partial_{t} w=-(-\Delta)^{\frac12} w+(-\Delta)^{-\frac12} f\Big(\frac{w+\bar{w}}{2}\Big)
\end{align}
through the transformation $w=u-i(-\Delta)^{-\frac12} \partial_{t} u$, which is then discretized by the low-regularity integrators proposed in \cite{RS21} for first-order semilinear evolution equations.
Such low-regularity types of numerical schemes have recently gained a lot of attention in particular in the context of the nonlinear Schr\"odinger equation (see, e.g., \cite{BS,OS18,ORS20,RS21}), KdV equation (see, e.g., \cite{Hofmanova-Schratz-2017,Li-Wu-Yao-2021,Wu-Zhao-IMA,Wu-Zhao-BIT}), and the Navier--Stokes equations \cite{LMS}.
Second-order approximations to the solutions of these equations in the $H^s$ norm generally require the solutions to be bounded in $H^s(\Omega)$ for $s>d/2+1$.

In this article, we construct a new time-stepping method for the semilinear Klein--Gordon equation through analyzing the consistency errors in approximating the solution. By discovering and utilizing a new cancellation structure of the semilienar Klein--Gordon equation,we manage to find a low-regularity correction of the Lie splitting method, i.e.,
\begin{align}\label{2nd-method}
\left(\begin{array}{c}
u^{n+1} \\
v^{n+1}
\end{array}\right)=\underbrace{e^{\tau L}\left(\begin{array}{c}
u^{n} \\
v^{n}
\end{array}\right)+\tau e^{\tau L}\left(\begin{array}{c}
0 \\
f\left(u^{n}\right)
\end{array}\right)}_{\text {Lie splitting }}+\underbrace{\tau^{2} e^{\tau L} \varphi_{2}(-2 \tau L)\left(\begin{array}{c}
-f\left(u^{n}\right) \\
f^{\prime}\left(u^{n}\right) v^{n}
\end{array}\right)}_{\text {low-regularity correction}}
\end{align}
where $(u^n,v^n)^\top$ is an approximation to $(u(t_n),\partial_tu(t_n))^\top$, and $L$ is a linear anti-symmetric partial differential operator defined by
\begin{equation}\label{def-L}
L =
\left(\begin{array}{cc}
0 & 1 \\
\Delta & 0
\end{array} \right) :
[H^2(\Omega)\cap H^1_0(\Omega)]\times H^1(\Omega)\rightarrow H^1_0(\Omega)\times L^2(\Omega) .
\end{equation}
The last term in \eqref{2nd-method}, which contains the operator $\varphi_2(-2\tau L):=(2 \tau L)^{-2}\left(e^{-2 \tau L}+2 \tau L-I\right) $,
is a low-regularity correction term for the Lie splitting, i.e. it improves the Lie splitting method to second order under low-regularity conditions, without requiring second-order partial derivatives of the solution.
Theoretically, we prove that the new time-stepping method can achieve second-order convergence in the energy space $H^1(\Omega)\times L^2(\Omega)$ under the regularity condition $(u^0,v^0)\in H^{1+\frac{d}{4}}(\Omega)\times H^{\frac{d}{4}}(\Omega)$ for spatial dimension $d=1,2,3$; see Theorem \ref{THM:2nd-H1}. In the one-dimension case, the proposed method is shown to have a convergence order arbitrarily close to $\frac53$ in the energy space $H^1(\Omega)\times L^2(\Omega)$ for solutions in the same space, i.e. no additional regularity in the solution is required.
The numerical experiments in this article shows that the proposed method is practically higher-order than all the existing numerical methods for the semilinear Klein--Gordon equation for low-regularity solutions in the energy space $H^1(\Omega)\times L^2(\Omega)$.


%
%


The following convergence result is proved in this article.

\begin{theorem}\label{THM:2nd-H1-TD}
Let $f:\R\rightarrow\R$ be a given nonlinear function satisfying the following Lipschitz continuity condition
{\rm(}for some constants $C_1${\rm)}:
\begin{align}\label{condition-f}
|f'(s)|\le C_1 \quad\mbox{and}\quad |f''(s)|\le C_1 \quad\mbox{for}\,\,\, s\in\R .
\end{align}
Then, for $d=1,2,3$ and $(u^0,v^0) \in H^{1+\frac{d}{4}}(\Omega)\cap H^1_0(\Omega)\times H^{\frac{d}{4}}(\Omega)$, the numerical solution given by \eqref{2nd-method} has the following error bound:
\begin{align}
\max_{0\le n\le T/\tau}
\big( \|u^n-u(t_n)\|_{H^1(\Omega)} + \|v^n-\partial_tu(t_n)\|_{L^2(\Omega)} \big)
\le
&C_2 \tau^2 ,
\label{H1-error}
\end{align}
where $C_2$ is some positive constant independent of the stepsize $\tau$
{\rm(}but may depend on $T${\rm)}.

Moreover, for $d=1$ and $(u^0,v^0) \in H^1_0(\Omega)\times L^2(\Omega)$, the numerical solution given by \eqref{2nd-method} has the following error bound:
\begin{align}
\max_{0\le n\le T/\tau}
\big( \|u^n-u(t_n)\|_{H^1(\Omega)} + \|v^n-\partial_tu(t_n)\|_{L^2(\Omega)} \big)
\le
&C_3 \tau^{\frac53-\epsilon} ,
\label{H1-error22}
\end{align}
where $\epsilon\in(0,1)$ is an arbitrary fixed small constant, and $C_3$ is some positive constant independent of the stepsize $\tau$
{\rm(}but may depend on $T${\rm)}.
\end{theorem}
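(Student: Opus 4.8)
The plan is to use the standard argument that converts a one-step (consistency) error bound into a global error bound, concentrating the work on the consistency analysis, where the cancellation structure must be exposed. Writing $w=(u,\partial_t u)^\top$ and $F(w)=(0,f(u))^\top$, the exact solution obeys the variation-of-constants formula $w(t_{n+1})=e^{\tau L}w(t_n)+\int_0^\tau e^{(\tau-s)L}F(w(t_n+s))\,ds$. Since $L$ is anti-symmetric in the energy inner product, $e^{\tau L}$ is an isometry on $H^1_0(\Omega)\times L^2(\Omega)$, and condition \eqref{condition-f} makes the maps $(u,v)\mapsto(0,f(u))^\top$ and $(u,v)\mapsto(-f(u),f'(u)v)^\top$ Lipschitz on the energy space. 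Inserting the exact solution into \eqref{2nd-method} and subtracting gives $\|e^{n+1}\|_{H^1\times L^2}\le(1+C\tau)\|e^n\|_{H^1\times L^2}+\|\mathcal R^n\|_{H^1\times L^2}$ for the error $e^n=(u^n-u(t_n),v^n-\partial_tu(t_n))$, so a discrete Gronwall inequality reduces \eqref{H1-error} to the local bound $\|\mathcal R^n\|_{H^1\times L^2}\le C\tau^3$, and \eqref{H1-error22} to $\|\mathcal R^n\|_{H^1\times L^2}\le C\tau^{8/3-\epsilon}$ in one dimension.

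To analyze $\mathcal R^n$ I would factor out the isometry $e^{\tau L}$ and use the explicit action $e^{-sL}(0,g)^\top=(-A^{-1}\sin(sA)g,\ \cos(sA)g)^\top$, where $A=(-\Delta)^{1/2}$ and $L^2=\Delta$. Taylor expanding in time with the equation $\partial_{tt}u=\Delta u+f(u)$ built in gives $f(u(t_n+s))=g_0+s\,g_1+\tfrac12 s^2 g_2+\tilde r(s)$ with $g_0=f(u)$, $g_1=f'(u)\partial_t u$, and crucially $g_2=f''(u)(\partial_t u)^2+f'(u)\big(\Delta u+f(u)\big)$, all at $t_n$, and $\tilde r(s)=O(s^3)$. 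Carrying out the explicit integrals of the $g_0$ and $g_1$ contributions against $A^{-1}\sin(sA)$ and $\cos(sA)$ and writing $\varphi_2(-2\tau L)=P(\tau A)\,I+Q(\tau A)\,L$ with scalar symbols $P,Q$, one checks that the correction term removes all $O(\tau^2)$ contributions (this much holds for any scaling of the argument of $\varphi_2$). The decisive point is at $O(\tau^3)$ in the velocity component: the mismatch in reproducing $\int_0^\tau\cos(sA)g_0\,ds=A^{-1}\sin(\tau A)g_0$ by the scheme contributes $\tfrac{(c-1)\tau^3}{6}A^2 g_0$ when the argument is $-c\tau L$, while the quadratic Taylor coefficient contributes $\tfrac{\tau^3}{6}g_2\supset\tfrac{\tau^3}{6}f'(u)\Delta u$. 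Choosing $c=2$, i.e. the argument $-2\tau L$, turns the former into $-\tfrac{\tau^3}{6}\Delta f(u)=-\tfrac{\tau^3}{6}\big(f''(u)|\nabla u|^2+f'(u)\Delta u\big)$, whose $f'(u)\Delta u$ part cancels exactly the $f'(u)\Delta u$ from $g_2$. This forced choice of the factor $2$, and the resulting disappearance of the only $\Delta u$-bearing term, is the cancellation structure; what survives at $O(\tau^3)$ is only $\tfrac{\tau^3}{6}\big(f''(u)((\partial_t u)^2-|\nabla u|^2)+f'(u)f(u)\big)$.

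It remains to bound this surviving term in $L^2$, and this is what dictates the regularity threshold: by \eqref{condition-f} and the embedding $H^{\frac d4}(\Omega)\hookrightarrow L^4(\Omega)$ (valid for $d=1,2,3$), $\|f''(u)((\partial_t u)^2-|\nabla u|^2)\|_{L^2}\le C(\|\partial_t u\|_{L^4}^2+\|\nabla u\|_{L^4}^2)\le C(\|\partial_t u\|_{H^{d/4}}^2+\|u\|_{H^{1+d/4}}^2)$, which is exactly the hypothesized norm, and $f'(u)f(u)$ is harmless in $L^2$. The higher-order remainder $\tilde r(s)=O(s^3)$ together with the higher-order parts of the $g_0,g_1,g_2$ integrals is controlled using the smoothing encoded in the symbols of the oscillatory operators (each factor $A^{-1}\sin(sA)$, or $\cos(sA)$ integrated against a weight vanishing to sufficient order in $s$, supplies a power of $A^{-1}$), so these stay at order $\tau^3$ under the same regularity. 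Combined with the a priori bound $(u,\partial_tu)\in L^\infty(0,T;H^{1+\frac d4}\times H^{\frac d4})$ guaranteed by well-posedness, this yields $\|\mathcal R^n\|_{H^1\times L^2}\le C\tau^3$ and hence \eqref{H1-error}. I expect the isolation of the cancelling $f'(u)\Delta u$ pair and the bookkeeping of the remaining oscillatory integrals to be the main obstacle, since every surviving term must be shown to cost no more than $\tfrac d4$ spatial derivatives.

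For the sharper one-dimensional estimate \eqref{H1-error22}, only $(u,\partial_tu)\in H^1_0\times L^2$ is available, so even the surviving term $f''(u)((\partial_t u)^2-|\nabla u|^2)$ is out of $L^2$ (neither $\partial_t u$ nor $\nabla u$ need lie in $L^4$). Here I would keep the oscillatory operator $\int_0^\tau\cos(sA)\tfrac12 s^2\,ds$ intact instead of replacing it by its leading constant $\tfrac{\tau^3}{6}$, split it into low frequencies $\{A\le N\}$ and high frequencies $\{A>N\}$, bound the low part by the smooth symbol estimate and the high part by the decay $|\text{symbol}|\le C\tau A^{-2}$, and estimate the quadratic products $(\partial_t u)^2$ and $|\nabla u|^2$ by one-dimensional bilinear/Strichartz estimates for the wave group together with $H^1(\Omega)\hookrightarrow L^\infty(\Omega)$. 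Optimizing the cutoff $N=N(\tau)$ balances the two contributions and trades the missing spatial derivatives for fractional powers of $\tau$, producing a local error of size $\tau^{8/3-\epsilon}$, the $\epsilon$-loss coming from the endpoint of the interpolation; summation then gives the global rate $\tau^{5/3-\epsilon}$. I expect this frequency-truncation optimization, rather than the algebra of the cancellation, to be the technically most delicate part of the low-regularity estimate.
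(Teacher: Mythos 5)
Your algebra identifies exactly the structure the paper is built on: the factor $2$ in $\varphi_2(-2\tau L)$ is forced, the $f'(u)\Delta u$ contributions cancel, the surviving $O(\tau^3)$ term is the null-form quantity $f''(u)\big((\partial_t u)^2-|\nabla u|^2\big)$, and the threshold $H^{1+\frac d4}\times H^{\frac d4}$ comes from $H^{\frac d4}(\Omega)\hookrightarrow L^4(\Omega)$. The global-from-local reduction by energy-norm stability and Gronwall is also the paper's (carried out there for the fully discrete spectral scheme, with $N\to\infty$ at the end). The difference, and the gap, is in \emph{how} the cancellation is exposed.

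You Taylor-expand the exact solution in time, $f(u(t_n+s))=g_0+sg_1+\tfrac12 s^2 g_2+\tilde r(s)$ with $g_2=f''(u)(\partial_t u)^2+f'(u)(\Delta u+f(u))$ and ``$\tilde r(s)=O(s^3)$''. Under the theorem's regularity this remainder is \emph{not} $O(s^3)$ in any usable norm: $\tfrac{d^3}{ds^3}f(u(t_n+s))$ contains $f'(u)\Delta\partial_t u$ and $f''(u)\,\partial_t u\,\Delta u$ (and $f'''(u)(\partial_t u)^3$, while \eqref{condition-f} only bounds $f'$ and $f''$), so it costs roughly $(u,\partial_tu)\in H^3\times H^2$. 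The cancellation you found removes only the explicit $f'(u)\Delta u$ inside $g_2$; it does nothing to $\tilde r(s)$. Your proposed rescue---smoothing from the oscillatory symbols---does not work, because $e^{(\tau-s)L}$ is merely bounded (energy-conserving), and time integration yields inverse powers of $A$ only when the integrand's time dependence is that of the free group; the Taylor remainder of the \emph{nonlinear} flow has no such structure. This is precisely the obstruction the paper's architecture is designed to avoid: it never differentiates the exact solution in time. Instead it iterates Duhamel \eqref{Duhamel2} and Taylor-expands the nonlinearity functionally about the free flow $e^{sL}U(t_n)$ (see \eqref{Taylor-F}); every time derivative then falls on free-flow quantities, where the cancellation \eqref{cancellation} and the identity $F'(U)F(U)=0$ apply, and every remainder ($R_3$, $R_*$, $R_{41}$--$R_{43}$) involves only first-order derivatives and $L^4$ products. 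Your proof becomes correct exactly when the time-Taylor step is replaced by this Duhamel iteration; as written, the step controlling $\tilde r(s)$ fails at the stated regularity.

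Two further points. First, the stability claim that $(u,v)\mapsto(-f(u),f'(u)v)^\top$ is Lipschitz on the energy space is false for $d=2,3$ at the $H^1\times L^2$ level (the terms $(f'(u)-f'(\bar u))\nabla\bar u$ and $(f'(u)-f'(\bar u))\bar v$ need $L^4$ control of $\nabla\bar u$, $\bar v$); the paper instead measures the $H$-difference in the weak norm $\|\cdot\|_0$ and exploits that the prefactor $(2L)^{-1}\big[\tau e^{\tau L}-(2L)^{-1}(e^{\tau L}-e^{-\tau L})\big]$ maps $\|\cdot\|_0$ into $\|\cdot\|_1$ with an extra factor $\tau$. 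This is fixable but not cosmetic. Second, for the one-dimensional bound \eqref{H1-error22} your route (frequency cutoff plus bilinear/transversality estimates) genuinely differs from the paper's, which interpolates $\|R_*\|_1$ between $\|R_*\|_{\frac12-\epsilon}\lesssim\tau^3\|U\|_1^2$ (using $L^1\hookrightarrow H^{-\frac12-\epsilon}$ in one dimension) and $\|R_*\|_2\lesssim\tau\|U\|_1$, see \eqref{R-star-d=1}. But you assert, rather than derive, the local rate $\tau^{8/3-\epsilon}$, and that assertion is the entire content of the claim: $8/3$ is exactly what the global rate $5/3$ requires, whereas the concrete bounds available here---Bernstein with $L^1$ products, the null-form $L^2_{t,x}$ estimate, or the paper's own interpolation exponent $\tau^{(7/2+\epsilon)/(3/2+\epsilon)}$---all yield strictly smaller local exponents ($5/2$, respectively about $7/3$). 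So the cutoff optimization you defer is not routine bookkeeping; it is the crux, and in your proposal the one-dimensional statement remains unproven.
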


\begin{remark}\upshape
The consistency errors of the numerical method actually only contain first-order partial derivatives of the solution, instead of $1+\frac{d}{4}$ order partial derivatives. The regularity condition $H^{1+\frac{d}{4}}(\Omega)\times H^{\frac{d}{4}}(\Omega)$ arises from the use of Sobolev embedding $H^{1+\frac{d}{4}}(\Omega)\hookrightarrow W^{1,4}(\Omega)$ in the error estimation. In the numerical experiments (see Figure \ref{fig:sineKG_th1} in Section \ref{sec:numExp}), we observe second-order convergence of the method for $H^1(\Omega)\times L^2(\Omega)$ initial data.
\end{remark}

The Lipschitz continuity condition in \eqref{condition-f} can be removed in the case $d=1$, as the $L^\infty$ bound of the numerical solution $u^n$ can be proved by using its convergence in $H^1$. For $d=2,3$ this Lipschitz continuity condition is needed for a general nonlinear function $f(u)$, but is still possible to be removed for some special nonlinear functions such as $f(u)=u^2$. Since such analysis requires different treatments for different nonlinearities (for $d=2,3$), we focus on the construction of the low-regularity integrator in the general case $d=1,2,3$ with a general nonlinear function under the Lipschitz continuity condition.

The rest of this article is devoted to the construction of the method and the proof of the theorem.
In Section \ref{sec:main_results} we construct the second-order low-regularity integrator by analyzing the consistency errors in approximating the semilinear Klein--Gordon equation. In Section \ref{section:spatial} we present error estimates for a fully discrete spectral method with the time-stepping scheme in \eqref{2nd-method} (see Theorem \ref{THM:2nd-H1} and Remark \ref{remarkTHM}), which imply Theorem \ref{THM:2nd-H1-TD} by passing to the limit $N\rightarrow\infty$, where $N^d$ denotes the degrees of freedom in the spatial discretization.
The numerical experiments are presented in Section \ref{sec:numExp} to show the favorable error behaviour of the new scheme for both nonsmooth and smooth initial data.

\section{Construction of the low-regularity integrators}\label{sec:main_results}
We rewrite the semilinear Klein--Gordon equation into the following first-order system, i.e.,
\begin{equation}
\label{pde2}
\left \{
\begin{aligned}
&\partial_t U - L U = F(U) && \mbox{in}\,\,\,  \varOmega\times (0,T] ,\\
&U(t_n)=U^0 && \mbox{in}\,\,\, \varOmega ,
\end{aligned}
\right .
\end{equation}
where
\begin{equation}\label{nonlinearity-F}
U=
\left(\begin{aligned}
u\,\, \\
\partial_tu
\end{aligned} \right)
,\quad
U^0=
\left(\begin{aligned}
u^0 \\
v^0
\end{aligned} \right)
\quad\mbox{and}\quad
F(U) =
\left(\begin{array}{cc}
0 \\
f(u)
\end{array} \right) ,
\end{equation}
and $L$ is defined in \eqref{def-L}.
Under the Lipschitz continuity condition \eqref{condition-f}, it is well known that problem \eqref{pde2} has a unique energy solution $U\in L^\infty(0,T;H^1_0(\Omega)\times L^2(\Omega))$ satisfying the following variation-of-constants formula:
\begin{align}\label{Duhamel-f}
U(t+s) = e^{sL}U(t) + \int_0^s e^{(s-\sigma)L}F(U(t+\sigma))\d \sigma \quad\mbox{for}\,\,\, t,s\ge 0 ,
\end{align}
where $e^{tL}$ is the continuous semigroup on $H^1_0(\Omega)\times L^2(\Omega)$ generated by the anti-symmetric partial differential operator $L$.

For the simplicity of notation, we denote by $A\lesssim B$ the statement
``$A\le CB$ for some constant $C$ which is independent of the stepsize $\tau$
(or the degrees of freedom $N$ in the case there is spatial discretization)''.

For the error analysis we define the energy norm $|W|_{1} = \big( \|\nabla w_1\|_{L^2(\Omega)}^2 + \|w_2\|_{L^2(\Omega)}^2 \big)^{\frac12} $ and the following non-energy norms:
\begin{align*}
\|W\|_{0} &= \big( \|w_1\|_{L^2(\Omega)}^2 + \|w_2\|_{H^{-1}(\Omega)}^2\big)^{\frac12} ,\\
\|W\|_{1} &= \big( \|\nabla w_1\|_{L^2(\Omega)}^2 + \|w_2\|_{L^2(\Omega)}^2 \big)^{\frac12} ,\\
\|W\|_{2} &= \big( \|w_1\|_{H^2(\Omega)}^2 + \|w_2\|_{H^{1}(\Omega)}^2\big)^{\frac12} .
\end{align*}
It is known that the semigroup $e^{tL}$ satisfies the energy conservation $|e^{tL} W |_1 =  |W|_1$ for $W\in H^1_0(\Omega)\times L^2(\Omega)$, and the following estimates:
\begin{align}\label{Hk-estimates}
\begin{aligned}
\|e^{tL} W \|_0 &\lesssim \|W\|_0 \quad\forall\, W\in L^2(\Omega)\times H^{-1}(\Omega) ,\\
\|e^{tL} W \|_1 &\lesssim \|W\|_1 \quad\forall\, W\in H^1_0(\Omega)\times L^2(\Omega) ,\\
\|e^{tL} W \|_2 &\lesssim \|W\|_2 \quad\forall\, W\in [H^2(\Omega)\cap H^1_0(\Omega)] \times L^2(\Omega) .
\end{aligned}
\end{align}
Moreover, the nonlinear function $F(U)$ defined in \eqref{nonlinearity-F} satisfies the following estimate:
\begin{equation}
\|F(U)\|_1 \lesssim \|f(u)\|_{L^2} \lesssim \|U\|_{0} .
\end{equation}

In the following two subsections, we study the consistency errors in approximating formula \eqref{Duhamel-f}. We begin with a first-order approximation in the next subsection, which provides insights for us for the construction of the second-order low-regularity integrator.

\subsection{First-order approximation}
Let $t_n=n\tau$, $n=0,1,\dots,[T/\tau]$, be a sequence of discrete time levels with stepsize $\tau$, and consider the variation-of-constant formula:
\begin{align}\label{Duhamel2}
U(t_n+s) = e^{sL}U(t_n) + \int_0^s e^{(s-\sigma)L}F(U(t_n+\sigma))\d \sigma \quad\mbox{for}\,\,\, s\in[0,\tau] ,
\end{align}
which implies that
\begin{align}\label{Duhamel}
U(t_{n+1}) = e^{\tau L}U(t_n) + \int_{0}^{\tau} e^{(\tau-s)L}F(U(t_n+s))\d s  .
\end{align}
Substituting \eqref{Duhamel2} into the right-hand side of \eqref{Duhamel} yields
\begin{align}\label{Duhamel3}
U(t_{n+1}) =& e^{\tau L}U(t_n) + \int_0^{\tau} e^{(\tau-s)L}F(e^{sL}U(t_n))\d s
+ R_{1}(t_{n}) ,
\end{align}
where the remainder $R_{1}(t_{n})$ is given by
\begin{align}\label{R1}
R_{1}(t_{n})
= & \int_0^\tau e^{(\tau-s)L}[ F(U(t_n+s))-F(e^{sL}U(t_n))] \d s .
\end{align}

For the simplicity of notation, we denote by $\tilde u(t_n+s)$ and $\tilde v(t_n+s)$ the two functions defined by
$$
\left(\begin{array}{cc}
\tilde u(t_n+s)  \\
\tilde v(t_n+s)
\end{array} \right)
= e^{s L}
\left(\begin{array}{cc}
u(t_n) \\
\partial_tu(t_n)
\end{array} \right)
= e^{s L} U(t_n) .
$$
Then the remainder $R_1(t_n)$ defined in \eqref{R1} satisfies the following estimate in view of \eqref{Duhamel2}:
\begin{align*}
\| R_{1}(t_{n}) \|_1
\lesssim & \int_0^\tau \| F(U(t_n+s))-F(e^{sL}U(t_n)) \|_1 \d s \notag\\
= & \int_0^\tau \| f(u(t_n+s)) - f(\tilde u(t_n+s)) \|_{L^2(\Omega)} \d s \notag\\
\lesssim & \int_0^\tau \| u(t_n+s) - \tilde u(t_n+s) \|_{L^2(\Omega)} \d s \notag\\
\lesssim & \int_0^\tau \| U(t_n+s) - e^{sL}U(t_n) \|_0 \d s \notag\\
\lesssim & \int_0^\tau \int_0^s \| e^{(s-\sigma)L}F(U(t_n+\sigma)) \|_0 \d \sigma \d s \notag\\
\lesssim & \int_0^\tau \int_0^s \|F(U(t_n+\sigma)) \|_0 \d \sigma \d s \notag\\
\lesssim &\, \tau^2 \max_{\sigma\in[0,\tau]} \|f(u(t_n+\sigma))\|_{H^{-1}} .
\end{align*}
Since $H^1_0(\Omega)\hookrightarrow L^6(\Omega)$, it follows that $L^{\frac65}(\Omega)=L^{6}(\Omega)'\hookrightarrow H^1_0(\Omega)'=H^{-1}(\Omega)$ and therefore
\begin{align*}
\|f(u(t_n+\sigma))\|_{H^{-1}}
&\le \|f(u(t_n+\sigma))\|_{L^{6/5}} \notag\\
&\le \|f(u(t_n+\sigma))\|_{L^2}  \notag\\
&\le \|f(0)\|_{L^2}+\|f(u(t_n+\sigma))-f(0)\|_{L^2} \notag\\
&\le \|f(0)\|_{L^2} + \|u(t_n+\sigma)\|_{L^2} \notag\\
&\le \|f(0)\|_{L^2} + \|U(t_n+\sigma)\|_{0} .
\end{align*}
The two estimates above imply the following estimate for the remainder $R_{1}(t_{n})$:
\begin{align}\label{R1-estimate}
\| R_{1}(t_{n}) \|_1
\lesssim &\, \tau^2\Big(1+ \max_{\sigma\in[t_n,t_{n+1}]} \|U(t)\|_{0}\Big) .
\end{align}

Freezing the variable $s$ at $0$ in \eqref{Duhamel3} would yield
\begin{align}\label{Duhamel32}
U(t_{n+1}) = e^{\tau L}U(t_n) +  \tau e^{\tau L}F(U(t_n))
+ R_{1}(t_n)+ R_{2}(t_n)  ,
\end{align}
with an additional remainder
\begin{align}\label{R2}
R_{2}(t_n)
= & \int_0^\tau e^{\tau L} [e^{-sL}F(e^{sL}U(t_n)) - F(U(t_n)) ] \d s \notag \\
= & \int_0^\tau e^{\tau L} \int_0^s \frac{\d}{\d \sigma}e^{-\sigma L}F(e^{\sigma L}U(t_n))\d \sigma \d s .
\end{align}
By using the chain rule of differentiation, it is straightforward to verify that
\begin{align}\label{R2-estimate1}
 \frac{\d}{\d \sigma}e^{-\sigma L}F(e^{\sigma L}U(t_n))
= & \frac{\d}{\d \sigma}e^{-\sigma L}F(e^{\sigma L}U(t_n)) \notag \\
= & - e^{-\sigma L} L F(e^{\sigma L}U(t_n))
+ e^{-\sigma L} F'(e^{\sigma L}U(t_n)) e^{\sigma L}LU(t_n) \notag \\
= & - e^{-\sigma L}
\left(\begin{array}{cc}
0 & 1 \\
\Delta & 0
\end{array} \right)
\left(\begin{array}{cc}
0 \\
f(\tilde u(t_n+\sigma))
\end{array} \right) \notag\\
&
+ e^{-\sigma L}
\left(\begin{array}{cc}
0 & 0 \\
f'(\tilde u(t_n+\sigma)) & 0
\end{array} \right)
\bigg[
\left(\begin{array}{cc}
0 & 1 \\
\Delta & 0
\end{array} \right)
e^{\sigma L} \left(\begin{array}{cc}
u(t_n)\\
\partial_t u(t_n)
\end{array} \right) \bigg] \notag\\
= & - e^{-\sigma L}
\left(\begin{array}{cc}
0 & 1 \\
\Delta & 0
\end{array} \right)
\left(\begin{array}{cc}
0 \\
f(\tilde u(t_n+\sigma))
\end{array} \right) \notag\\
&
+ e^{-\sigma L}
\left(\begin{array}{cc}
0 & 0 \\
f'(\tilde u(t_n+\sigma)) & 0
\end{array} \right)
 \left(\begin{array}{cc}
\tilde v(t_n+\sigma) \\
\Delta \tilde u(t_n+\sigma)
\end{array} \right)  \notag\\
=  &\,
e^{-\sigma L}
\left(\begin{array}{cc}
-f(\tilde u(t_n+\sigma)) \\
f'(\tilde u(t_n+\sigma)) \tilde v(t_n+\sigma)
\end{array} \right) .
\end{align}
Therefore,
\begin{align}\label{R2-estimate2}
\bigg\| \frac{\d}{\d \sigma}e^{-\sigma L}F(e^{\sigma L}U(t_n)) \bigg\|_1
&\lesssim
\left\| \left(\begin{array}{cc}
-f(\tilde u(t_n+\sigma)) \\
f'(\tilde u(t_n+\sigma)) \tilde v(t_n+\sigma)
\end{array} \right)  \right\|_1 \notag\\
&\lesssim
(1+\| \tilde u(t_n+\sigma) \|_{H^1(\Omega)})
+  \|\tilde v(t_n+\sigma) \|_{L^2(\Omega)} \notag\\
&\lesssim
1+\|  U(t_n) \|_{1}  .
\end{align}
By utilizing this result, from \eqref{R2} we obtain
\begin{align}\label{R2-estimate4}
\|R_{2}(t_n) \|
\lesssim
 \tau^2 (1+ \|  U(t_n) \|_{1}) .
\end{align}

By dropping the remainders $R_{1} $ and $R_{2}$ in \eqref{Duhamel32}, we obtain the following time-stepping method:
\begin{align}\label{semidiscrete}
U^{n+1} =& e^{\tau L} U^{n} +  \tau  e^{\tau L}F(U^{n})
\end{align}
In view of the two estimates \eqref{R1-estimate} and \eqref{R2-estimate4}, the method in \eqref{semidiscrete} should have first-order convergence in the energy space $H^1_0(\Omega)\times L^2(\Omega)$ under the regularity condition $$U\in L^\infty(0,T;H^1_0(\Omega)\times L^2(\Omega)). $$

This is the same regularity condition in \cite{HL1999,G15,WW2019} for first-order convergence in the energy space. This condition is required in \eqref{R2-estimate2} in estimating the remainder $R_2(t_n)$, which involves $\frac{\d}{\d \sigma}e^{-\sigma L}F(e^{\sigma L}U(t_n))$.

From the analysis above we can see that, in order to have higher-order convergence in the energy space, higher-order approximations of $F(U(t_n+s))$ should be used in approximating \eqref{Duhamel}. This is considered in the next subsection.

In the construction of a second-order method, the remainder which involves the term $\frac{\d}{\d \sigma}e^{-\sigma L}F(e^{\sigma L}U(t_n))$ will require the solution to be in $H^2(\Omega)\times H^1(\Omega)$. We shall construct a second-order approximation by eliminating this part of the remainder, thus significantly improves the order of convergence without requiring additional regularity of the solution.

\subsection{Second-order approximation} \label{section:2nd}
By using the Taylor expansion of $F(U)$ at $U=e^{sL}U(t_n)$, we have
\begin{align}\label{Taylor-F1}
F(U(t_n+s))
=& \,
F(e^{sL}U(t_n))
+\int_0^1 F'((1-\theta) e^{sL}U(t_n) +\theta U(t_n+s)) (U(t_n+s)-e^{sL}U(t_n)) \d\theta \notag\\
=& \,
F(e^{sL}U(t_n))
+F'(e^{sL}U(t_n))(U(t_n+s)-e^{sL}U(t_n)) \notag\\
&\,
+ R_F(s)  (U(t_n+s)-e^{sL}U(t_n))
\cdot (U(t_n+s)-e^{sL}U(t_n))
\end{align}
where
$$
R_F(s)
=  \int_0^1 \int_0^1  \theta F''[(1-\sigma)e^{sL}U(t_n)  + \sigma (1-\theta) e^{sL}U(t_n) +\theta U(t_n+s)] \d\sigma\d\theta .
$$
Then, substituting \eqref{Duhamel2} into \eqref{Taylor-F1}, we have
\begin{align}\label{Taylor-F}
F(U(t_n+s))
=& \,
F(e^{sL}U(t_n))
+F'(e^{sL}U(t_n)) \int_0^s e^{(s-\sigma)L}F(U(t_n+\sigma))\d \sigma + \widetilde R_3(s) ,
\end{align}
where
$$
\widetilde R_3(s) = R_F(s) \int_0^s e^{(s-\sigma)L}F(U(t_n+\sigma))\d \sigma
\cdot \int_0^s e^{(s-\sigma)L}F(U(t_n+\sigma))\d \sigma .
$$
Since $F(U)=(F_1(U),F_2(U))^\top$ is vector-valued, with $F_1(U)=0$ and $F_2(U)=f(u)$,
it follows that $F''(U)$ is tensor-valued and satisfying
$F''_{ijk}(U)=\partial_{U_k}\partial_{U_j}F_{i}(U)$, where $U_1=u$ and $U_2=v$.
In particular, $F_{211}''(U)=f''(u)$ and $F_{ijk}''(U)=0$ for $(i,j,k)\neq (2,1,1)$.
Therefore, for $W=(w_1,w_2)^\top$ and $W^*=(w_1^*,w_2^*)^\top$,
$$
\|R_F(s) W\cdot W^*\|_1
\le \| f''(u)w_1w_1^* \|_{L^2}
\lesssim \|w_1 \|_{L^4} \|w_1^*\|_{L^4}
\lesssim \|W \|_{1}\|W^* \|_{1}
 ,
$$
which implies the following estimate:
\begin{align}\label{tR31}
\| \widetilde R_3(s)\|_1
&\lesssim
\bigg\|\int_0^s e^{(s-\sigma)L}F(U(t_n+\sigma))\d \sigma\bigg\|_{1}^2 \notag\\
&\lesssim
\bigg|\int_0^s \| F(U(t_n+\sigma)) \|_{1}\d \sigma\bigg|^2 \notag\\
&\lesssim
\bigg|\int_0^s \| f(u(t_n+\sigma)) \|_{L^2}\d \sigma\bigg|^2 \notag\\
&\lesssim
\tau^2 \Big( 1 + \max_{\sigma\in[0,s]} \| u(t_n+\sigma) \|_{L^2}^2 \Big) \notag\\
&\lesssim  \tau^2\Big(1+ \max_{\sigma\in[0,\tau]} \|U(t_n+\sigma)\|_{0}\Big) .
\end{align}
By substituting \eqref{Taylor-F} into \eqref{Duhamel}, we obtain
\begin{align}\label{2nd-expr}
U(t_{n+1})
=&\, e^{\tau L}U(t_n) + \int_0^\tau e^{(\tau-s)L}F(U(t_n+s))\d s \notag\\
=&\, e^{\tau L}U(t_n) + \int_0^\tau e^{(\tau-s)L}F(e^{sL}U(t_n))\d s \notag\\
&\, + \int_0^\tau  e^{(\tau-s)L} \bigg[ F'(e^{sL}U(t_n))\int_0^s e^{(s-\sigma)L}F(U(t_n+\sigma))\d \sigma\bigg]  \d s  + R_3(t_n) \notag\\
=&\!: e^{\tau L}U(t_n) + I_1(t_n) + I_2(t_n)+ R_3(t_n),
\end{align}
with a remainder
$$
R_3(t_n)= \int_0^\tau  e^{(\tau-s)L} \widetilde R_3(s)\d s.
$$
The estimate in \eqref{tR31} implies the following result:
\begin{align}\label{tR3}
\| R_3(t_n) \|_1
\lesssim
\tau^3 \Big( 1+ \max_{t\in[t_n,t_{n+1}]} \| U(t) \|_{0}^2 \Big) .
\end{align}
The two terms $I_1(t_n)$ and $I_2(t_n)$ will be approximated by computable schemes as follows. \medskip

{\it Part 1: Approximation to $I_1(t_n)$.}

The key ingredient that significantly improves the accuracy of the numerical method is the discovery of a cancellation structure which allows us to compute $I_1(t_n)$ exactly.

We write $I_1(t_n) =  \int_0^\tau e^{\tau L}G(t_n+s)\d s$, with $G(t_n+s)=e^{-sL}F(e^{sL}U(t_n))$, and substitute the Newton--Leibniz formula
\begin{align}\label{Taylor-G}
G(t_n+s)
=G(t_n) + \int_0^s G'(t_n+\sigma)\d\sigma
\end{align}
into the expression of $I_1(t_n)$. Then we obtain
\begin{align}\label{I1-expr10}
I_1(t_n) = &\, \int_0^\tau e^{\tau L}G(t_n+s)\d s \notag\\
= &\, \int_0^\tau e^{\tau L}G(t_n)\d s
+ \int_0^\tau e^{\tau L}\int_0^s G'(t_n+\sigma)\d\sigma \d s \notag\\
= &\, \int_0^\tau e^{\tau L}G(t_n)\d s
+ \int_0^\tau e^{\tau L} G'(t_n+\sigma)(\tau-\sigma)\d\sigma  \notag\\
= &\, \int_0^\tau e^{\tau L}G(t_n)\d s
+ \int_0^\tau e^{(\tau-2s) L}(\tau-s) e^{2sL}G'(t_n+s)  \d s  \notag\\
= &\, \int_0^\tau e^{\tau L}G(t_n)\d s
+ \int_0^\tau e^{(\tau-2s) L}(\tau-s) G'(t_n)  \d s \notag\\
&
+  \int_0^\tau e^{(\tau-2s) L} (\tau-s) \int_0^s \frac{\d}{\d\sigma}[e^{2\sigma L}G'(t_n+\sigma)]  \d\sigma \d s \notag\\
= &\, \tau e^{\tau L}F(U(t_n))
+ (2L)^{-1} \big[ \tau e^{\tau L} - (2L)^{-1}(e^{\tau L} - e^{-\tau L})\big]
\left(\begin{array}{cc}
-f(u(t_n)) \\
f'(u(t_n)) \partial_tu(t_n)
\end{array} \right) \notag\\
&
+ R_*(t_n) ,
\end{align}
where we have used the expression of $G'(t_n+s)$ in \eqref{R2-estimate1}, and the remainder $R_*(t_n)$ is defined by
\begin{align}
R_*(t_n) = \int_0^\tau e^{(\tau-2s) L} (\tau-s) \int_0^s \frac{\d}{\d\sigma}[e^{2\sigma L}G'(t_n+\sigma)]  \d\sigma \d s .
\end{align}
By differentiating $e^{2s L}G'(t_n+s)$ and using the expression of $G'(t_n+s)$ in \eqref{R2-estimate1}, we also obtain
\begin{align} \label{cancellation}
&\hspace{-10pt} \frac{\d}{\d s}[e^{2s L}G'(t_n+s)] \notag\\
=&\,
\frac{\d}{\d s} \bigg[ e^{s L}
\left(\begin{array}{cc}
-f(\tilde u(t_n+s)) \\
f'(\tilde u(t_n+s)) \tilde v(t_n+s)
\end{array} \right) \bigg] \notag\\
=&\,
e^{s L}
\left(\begin{array}{cc}
0 &  1 \\
\Delta & 0
\end{array} \right)
\left(\begin{array}{cc}
-f(\tilde u(t_n+s)) \\
f'(\tilde u(t_n+s)) \tilde v(t_n+s)
\end{array} \right)  \notag\\
&\,
+
e^{s L}
\left(\begin{array}{cc}
-f'(\tilde u(t_n+s)) & 0 \\
f''(\tilde u(t_n+s)) \tilde v(t_n+s) & f'(\tilde u(t_n+s))
\end{array} \right)
\bigg[ \left(\begin{array}{cc}
0 &  1 \\
\Delta & 0
\end{array} \right) \left(\begin{array}{cc}
\tilde u(t_n+s)\\
\tilde v(t_n+s)
\end{array} \right) \bigg] \notag\\
=&e^{s L} \left(\begin{array}{cc}
0\\
 f''(\tilde u(t_n+s))(|\tilde v(t_n+s)|^2-|\nabla\tilde u(t_n+s)|^2)
\end{array} \right)  .
\end{align}
Note that the second-order partial derivatives are cancelled in \eqref{cancellation}. This cancellation structure in the semlinear Klein--Gordon equation has not been discovered before. It allows us to compute $I_1(t_n)$ without requiring the second-order partial derivatives and therefore improves the accuracy of the numerical approximation for low-regularity solutions. As a result, the remainder can be estimated as follows:
\begin{align}
\| R_*(t_n) \|_1
&\lesssim \tau^3 \max_{s\in[0,\tau]} (\|\nabla\tilde u(t_n+s)\|_{L^{4}}^2 + \|\tilde v(t_n+s)\|_{L^4}^2) \notag\\
&\lesssim \tau^3 \max_{s\in[0,\tau]} (\|\tilde u(t_n+s)\|_{H^{1+\frac{d}{4}}}^2 + \|\tilde v(t_n+s)\|_{H^{\frac{d}{4}}}^2) \notag\\
&\lesssim \tau^3  \| U(t_n)\|_{1+\frac{d}{4}}^2 .
\end{align}
In the case $d=1$, the following result holds:
\begin{align}
\| R_*(t_n) \|_{\frac{1}{2}-\epsilon}
&\lesssim \tau^3 \max_{s\in[0,\tau]} (\| |\nabla\tilde u(t_n+s)|^2 \|_{H^{-\frac{1}{2}-\epsilon}}^2 + \|\tilde v(t_n+s)^2\|_{H^{-\frac{1}{2}-\epsilon}}) \notag\\
&\lesssim \tau^3 \max_{s\in[0,\tau]} (\| \nabla\tilde u(t_n+s) \|_{L^2}^2 + \|\tilde v(t_n+s)\|_{L^2}^2) \notag\\
&\lesssim \tau^3  \| U(t_n)\|_{1}^2 .
\end{align}
By the definition of $R_*(t_n)$ in \eqref{I1-expr10} and the triangle inequality, we also obtain
\begin{align}
\| R_*(t_n) \|_{2}
&\lesssim
\bigg\|\int_0^\tau e^{\tau L}G(t_n+s)\d s\bigg\|_2 \notag\\
&\quad+
\bigg\|\tau e^{\tau L}F(U(t_n))
+ (2L)^{-1} \big[ \tau e^{\tau L} - (2L)^{-1}(e^{\tau L} - e^{-\tau L})\big]
\left(\begin{array}{cc}
-f(u(t_n)) \\
f'(u(t_n)) \partial_tu(t_n)
\end{array} \right) \bigg\|_2 \notag\\
&\lesssim \tau \| U(t_n)\|_{1} .
\end{align}
Therefore, the Sobolev interpolation inequality implies that
\begin{align}\label{R-star-d=1}
\| R_*(t_n) \|_{1} \lesssim
\| R_*(t_n) \|_{\frac{1}{2}-\epsilon}^{\frac{1}{3/2+\epsilon}}\| R_*(t_n) \|_{2}^{\frac{1/2+\epsilon}{3/2+\epsilon}}
\lesssim \tau^{\frac{7/2+\epsilon}{3/2+\epsilon}} ( \| U(t_n)\|_{1} + \| U(t_n)\|_{1}^2) .
\end{align}

 \medskip

{\it Part 2: Approximation to $I_2(t)$.}

By approximating $U(t_n+\sigma)$ with $e^{\sigma L}U(t_n)$ in the expression of $I_2(t_n)$ in \eqref{2nd-expr}, 
we have
\begin{align}\label{I2-expr1}
I_2(t_n)
=&\,
 \int_0^\tau  e^{(\tau-s)L} \bigg[ F'(e^{sL}U(t_n))\int_0^s e^{(s-\sigma)L}F(U(t_n+\sigma))\d \sigma\bigg]  \d s  \notag\\
=&\,
 \int_0^\tau  e^{(\tau-s)L} \bigg[ F'(e^{sL}U(t_n))\int_0^s e^{(s-\sigma)L}F(e^{\sigma L}U(t_n))\d \sigma\bigg]  \d s
+R_{41}(t_n)  \notag\\
=&\,
 \int_0^\tau   e^{(\tau-s)L} \bigg[ F'(e^{sL}U(t_n)) s e^{sL}F(U(t_n))\bigg]  \d s
+R_{41}(t_n)  +R_{42}(t_n)  \notag\\
=&\,
 \int_0^\tau  s e^{\tau L} \big[ F'(U(t_n)) F(U(t_n))\big]  \d s
+ R_{41}(t_n)  +R_{42}(t_n)  +R_{43}(t_n)   \notag\\
=&\,
R_{41}(t_n)  +R_{42}(t_n)  +R_{43}(t_n) ,
\end{align}
where the last equality uses the property
$$
F'(U(t_n)) F(U(t_n)) = \left(\begin{array}{cc}
0 &  0 \\
f'(u) & 0
\end{array} \right)
\left(\begin{array}{cc}
0  \\
f(u)
\end{array} \right)
= \left(\begin{array}{cc}
0  \\
0
\end{array} \right) ,
$$
and the remainders $R_{4j}(t_n)$, $j=1,2,3$, are defined by
\begin{align}
R_{41}(t_n)
=&
 \int_0^\tau  e^{(\tau-s)L} \bigg[ F'(e^{sL}U(t_n))\int_0^s e^{(s-\sigma)L}
[F(U(t_n+\sigma))-F(e^{\sigma L}U(t_n))]\d \sigma\bigg]  \d s , \\
R_{42}(t_n)
=&
 \int_0^\tau  e^{(\tau-s)L} \bigg[ F'(e^{sL}U(t_n))\int_0^s
\Big( e^{(s-\sigma)L}F(e^{\sigma L}U(t_n)) - e^{sL}F(U(t_n)) \Big) \d \sigma\bigg]  \d s, \\
R_{43}(t_n)
=&
 \int_0^\tau  s e^{\tau L} \Big( e^{-sL} \big[ F'(e^{sL}U(t_n)) e^{sL}F(U(t_n))\big] - F'(U(t_n)) F(U(t_n)) \Big) \d s  .
\end{align}
The three remainders $R_{41}(t_n)  $, $R_{42}(t_n)  $ and $R_{43}(t_n)  $ are estimated as follows.

Firstly,
\begin{align}
\| R_{41}(t_n)  \|_{1}
\lesssim&\,
 \int_0^\tau  \bigg\| F'(e^{sL}U(t_n))\int_0^s e^{(s-\sigma)L}
[F(U(t_n+\sigma))-F(e^{\sigma L}U(t_n))]\d \sigma\bigg]  \bigg\|_1 \d s \notag\\
\lesssim&\,
 \int_0^\tau  \bigg\| \int_0^s e^{(s-\sigma)L}
[F(U(t_n+\sigma))-F(e^{\sigma L}U(t_n))]\d \sigma\bigg]  \bigg\|_0 \d s \notag\\
\lesssim&\,
\tau^2
\max_{\sigma\in[0,\tau]} \| U(t_n+\sigma))-e^{\sigma L}U(t_n))\|_0
\end{align}
By using \eqref{Duhamel2} we obtain that
\begin{align}
\| U(t_n+s))-e^{s L}U(t_n))\|_0
\lesssim
s\max_{\sigma\in[0,s]} \| U(t_n+\sigma) \|_0 .
\end{align}
Then, substituting this result into the estimate of $\| R_{41}(t_n) \|_{1}$, we obtain
\begin{align}
\| R_{41}(t_n)  \|_{1}
\lesssim&\,
\tau^3 \max_{t\in[t_n,t_{n+1}]} \| U(t) \|_0
\end{align}

Secondly, substituting the identity $$e^{(s-\sigma)L}F(e^{\sigma L}U(t_n)) - e^{sL}F(U(t_n)) = e^{s L} \int_0^\sigma \frac{\d}{\d\rho} e^{-\rho L}F(e^{\rho L}U(t_n)) \d\rho$$ into the expression of $R_{42}(t_n)$ yields
\begin{align}
R_{42}(t_n)
=&
 \int_0^\tau  e^{(\tau-s)L} \bigg[ F'(e^{sL}U(t_n))\int_0^s e^{s L}
\int_0^\sigma \frac{\d}{\d\rho} e^{-\rho L}F(e^{\rho L}U(t_n)) \d\rho \d \sigma\bigg]  \d s .
\end{align}
From this expression we immediately obtain
\begin{align}
\| R_{42}(t_n) \|_1
\lesssim&\,
 \int_0^\tau  \bigg\| F'(e^{sL}U(t_n))\int_0^s e^{s L}
\int_0^\sigma \frac{\d}{\d\rho} e^{-\rho L}F(e^{\rho L}U(t_n)) \d\rho \d \sigma\bigg\|_1  \d s \notag\\
\lesssim&\,
 \int_0^\tau  \bigg\| \int_0^s e^{s L}
\int_0^\sigma \frac{\d}{\d\rho} e^{-\rho L}F(e^{\rho L}U(t_n)) \d\rho \d \sigma\bigg\|_0  \d s \notag\\
\lesssim&\,
 \int_0^\tau  \int_0^s
\int_0^\sigma \bigg\| \frac{\d}{\d\rho} e^{-\rho L}F(e^{\rho L}U(t_n))  \bigg\|_0 \d\rho \d \sigma  \d s \notag\\
\lesssim&\,
\tau^3 \| U(t_n)\|_1 ,
\end{align}
where we have used \eqref{R2-estimate2} in the last inequality.

Thirdly, we have
\begin{align}\label{R43-1}
\| R_{43}(t_n) \|_1
=&\,
\bigg\| \int_0^\tau  s e^{\tau L} \int_0^s \frac{\d}{\d\sigma} e^{-\sigma L} \big[ F'(e^{\sigma L}U(t_n)) e^{\sigma L}F(U(t_n))\big] \d\sigma \d s \bigg\|_1 \notag\\
\lesssim&\,
 \int_0^\tau  s \int_0^s \bigg\| \frac{\d}{\d\sigma} e^{-\sigma L} \big[ F'(e^{\sigma L}U(t_n)) e^{\sigma L}F(U(t_n))\big] \bigg\|_1   \d\sigma \d s \notag\\
\lesssim&\,
\tau^3 \max_{\sigma\in[0,\tau]} \bigg\| \frac{\d}{\d\sigma} e^{-\sigma L} \big[ F'(e^{\sigma L}U(t_n)) e^{\sigma L}F(U(t_n))\big] \bigg\|_1  .
\end{align}
Let
$$
\left(\begin{array}{cc}
\tilde p(t_n+\sigma) \\
\tilde q(t_n+\sigma)
\end{array} \right)
=e^{\sigma L}F(U(t_n))
=e^{\sigma L}
\left(\begin{array}{cc}
0 \\
f(u(t_n))
\end{array} \right) .
$$
which satisfies the following estimate according to the basic estimates in \eqref{Hk-estimates}:
\begin{align}\label{Hk-pq}
\|\tilde p(t_n+\sigma)\|_{H^k(\Omega)}
+\|\tilde q(t_n+\sigma)\|_{H^{k-1}(\Omega)}
\lesssim
\|f(u(t_n))\|_{H^{k-1}(\Omega)}\quad\mbox{for}\,\,\, k=1,2.
\end{align}
Then
\begin{align}
&\frac{\d}{\d\sigma}  F'(e^{\sigma L}U(t_n)) e^{\sigma L}F(U(t_n)) \notag\\
&=\frac{\d}{\d\sigma}
\left[
\left(\begin{array}{cc}
0 & 0 \\
f'(\tilde u(t_n+\sigma)) & 0
\end{array} \right)
\left(\begin{array}{cc}
\tilde p(t_n+\sigma) \\
\tilde q(t_n+\sigma)
\end{array} \right) \right] \notag\\
&=\frac{\d}{\d\sigma}
\left(\begin{array}{cc}
0 \\
f'(\tilde u(t_n+\sigma))\tilde p(t_n+\sigma)
\end{array} \right) \notag\\
&=\left(\begin{array}{cc}
0 & 0 \\
f''(\tilde u(t_n+\sigma))\tilde p(t_n+\sigma)  & 0
\end{array} \right)
\bigg[\left(\begin{array}{cc}
0 &  1 \\
\Delta & 0
\end{array} \right)
\left(\begin{array}{cc}
\tilde u(t_n+\sigma) \\
\tilde v(t_n+\sigma)
\end{array} \right) \bigg] \notag\\
&\quad
+
\left(\begin{array}{cc}
0 & 0 \\
f'(\tilde u(t_n+\sigma)) & 0
\end{array} \right)
\bigg[\left(\begin{array}{cc}
0 &  1 \\
\Delta & 0
\end{array} \right)
\left(\begin{array}{cc}
\tilde p(t_n+\sigma) \\
\tilde q(t_n+\sigma)
\end{array} \right) \bigg] \notag\\
&=\left(\begin{array}{cc}
0 \\
f''(\tilde u(t_n+\sigma))\tilde p(t_n+\sigma)\tilde v(t_n+\sigma)
+ f'(\tilde u(t_n+\sigma))\tilde q(t_n+\sigma)
\end{array} \right) ,
\end{align}
and therefore
\begin{align}
&\frac{\d}{\d\sigma} e^{-\sigma L} \big[ F'(e^{\sigma L}U(t_n)) e^{\sigma L}F(U(t_n))\big] \notag\\
&=-e^{-\sigma L}LF'(e^{\sigma L}U(t_n)) e^{\sigma L}F(U(t_n))
+ e^{-\sigma L} \frac{\d}{\d\sigma}  F'(e^{\sigma L}U(t_n)) e^{\sigma L}F(U(t_n)) \notag\\
&=
e^{-\sigma L}
\left(\begin{array}{cc}
- f'(\tilde u(t_n+\sigma))\tilde p(t_n+\sigma) \\
f''(\tilde u(t_n+\sigma))\tilde p(t_n+\sigma)\tilde v(t_n+\sigma)
+ f'(\tilde u(t_n+\sigma))\tilde q(t_n+\sigma)
\end{array} \right) .
\end{align}
This implies that
\begin{align}
&
\bigg\| \frac{\d}{\d\sigma} e^{-\sigma L} \big[ F'(e^{\sigma L}U(t_n)) e^{\sigma L}F(U(t_n))\big] \bigg\|_1 \notag\\
&\lesssim
\|f'(\tilde u(t_n+\sigma))\tilde p(t_n+\sigma)\|_{H^1(\Omega)} \notag\\
&\quad\, +
\big\| f''(\tilde u(t_n+\sigma))\tilde p(t_n+\sigma)\tilde v(t_n+\sigma)
+ f'(\tilde u(t_n+\sigma))\tilde q(t_n+\sigma) \big\|_{L^2(\Omega)} \notag\\
&\lesssim
\| \tilde p(t_n+\sigma)\|_{H^1(\Omega)}
+\| \tilde p(t_n+\sigma)\|_{L^\infty(\Omega)} \|\tilde u(t_n+\sigma) \|_{H^1(\Omega)} \notag\\
&\quad\,
+\| \tilde p(t_n+\sigma)\|_{L^\infty(\Omega)} \|\tilde v(t_n+\sigma) \|_{L^2(\Omega)}
+ \|\tilde q(t_n+\sigma) \|_{L^2(\Omega)} \notag\\
&\lesssim
\| \tilde p(t_n+\sigma)\|_{H^{\frac32+\epsilon}(\Omega)}
(\|\tilde u(t_n+\sigma) \|_{H^1(\Omega)}+\|\tilde v(t_n+\sigma) \|_{L^2(\Omega)}) \notag\\
&\quad\,
+\| \tilde p(t_n+\sigma)\|_{H^1(\Omega)}  + \|\tilde q(t_n+\sigma) \|_{L^2(\Omega)} \notag\\
&\lesssim
\| f(u(t_n)) \|_{H^{\frac12+\epsilon}(\Omega)} (\|\tilde u(t_n+\sigma) \|_{H^1(\Omega)}+\|\tilde v(t_n+\sigma) \|_{L^2(\Omega)}) \notag\\
&\quad\,
+\| \tilde p(t_n+\sigma)\|_{H^1(\Omega)}  + \|\tilde q(t_n+\sigma) \|_{L^2(\Omega)} \notag\\
&\lesssim
\|U(t_n)\|_{1}+\|U(t_n)\|_{1}^2 + \|U(t_n)\|_{0} .
\end{align}
By substituting this result into \eqref{R43-1}, we obtain
\begin{align}\label{R43}
\| R_{43}(t_n) \|_1
\lesssim&\,
\tau^3 (\|U(t_n)\|_{1}+\|U(t_n)\|_{1}^2 + \|U(t_n)\|_{0}) .
\end{align}
Therefore, from \eqref{I2-expr1} we obtain
\begin{align}\label{R4}
\| I_2(t_n) \|_1
&\lesssim \| R_{41}(t_n) \|_1 + \| R_{42}(t_n) \|_1 + \| R_{43}(t_n) \|_1 \notag\\
&\lesssim
\tau^3 (\|U(t_n)\|_{1}+\|U(t_n)\|_{1}^2 + \|U(t_n)\|_{0}) .
\end{align}

Therefore,  substituting expressions \eqref{I1-expr10} and \eqref{I2-expr1} into \eqref{2nd-expr} yields
\begin{align}\label{2nd-expr-2}
U(t_{n+1})
=&\,
e^{\tau L}U(t_n) + \tau e^{\tau L}F(U(t_n))
+ (2L)^{-1} \big[ \tau e^{\tau L} - (2L)^{-1}(e^{\tau L} - e^{-\tau L})\big]
H(U(t_n)) \notag\\
&\,
+ I_2(t_n) + R_*(t_n) + R_3(t_n) ,
\end{align}
where
$$
H(U(t_n)):= \left(\begin{array}{cc}
-f(u(t_n)) \\
f'(u(t_n)) \partial_tu(t_n)
\end{array} \right) .
$$
By dropping the remainders $R_*(t_n) $ and $R_3(t_n) $, we obtain the following numerical method:
\begin{align}\label{2nd-method0}
U^{n+1}
=&\,
e^{\tau L}U^{n} + \tau e^{\tau L}F(U^{n})
+ (2L)^{-1} \big[ \tau e^{\tau L} - (2L)^{-1}(e^{\tau L} - e^{-\tau L})\big]
H(U^n) ,
\end{align}
which can also be written as \eqref{2nd-method}.

In view of \eqref{2nd-method}, the new method we constructed here turns out to be a correction of the Lie splitting method without requiring second-order partial derivatives of the solution, i.e., it improves the accuracy of the Lie splitting method under low-regularity conditions.

\section{The spatial discretization}\label{section:spatial}

Let $\Omega=[0,1]^d$.
It is known that any function $V\in H^1_0(\Omega)\times L^2(\Omega)$ can be expanded into the Fourier sine series, i.e.,
\begin{align}
V = \sum_{n_1,\cdots,n_d=1}^\infty V_{n_1,\cdots,n_d} \sin(n_1\pi x_1)\cdots \sin(n_d\pi  x_d) .
\end{align}
Let
$$
S_N= \bigg\{ \sum_{n_1,\cdots,n_d=1}^N V_{n_1,\cdots,n_d} \sin(n_1\pi x_1)\cdots \sin(n_d\pi  x_d) :
V_{n_1,\cdots,n_d}\in \R^2  \bigg\} ,
$$
and denote by $I_N$ the trigonometric interpolation operator onto $S_N$. We consider the following fully discrete spectral method for the second-order low-regularity integrator in \eqref{2nd-method0}:
\begin{align}\label{space-2}
U^{n+1}_N
=&\,
e^{\tau L}U^n_N
+ \frac{\tau}{2} e^{\tau L} I_N F(U^n_N) \notag\\
&\,
+  (2L)^{-1} \big[ \tau e^{\tau L} - (2L)^{-1}(e^{\tau L} - e^{-\tau L})\big]
I_N H(U^n_N) .
\end{align}
For given $U^n_N$, the trigonometric interpolations $I_N F(U^n_N)$ and $I_N H(U^n_N)$ can be computed with FFT.

Let $E_N^n=\Pi_N U(t_n)-U^n_N$ be the error of the numerical solution.
Since the exact solution satisfies
\begin{align}\label{2nd-expr-3}
\Pi_NU(t_{n+1})
=&\,
e^{\tau L}\Pi_NU(t_{n}) + \frac{\tau }{2} e^{\tau L} \Pi_N F(U(t_{n})) \notag\\
&\,
+  (2L)^{-1} \big[ \tau e^{\tau L} - (2L)^{-1}(e^{\tau L} - e^{-\tau L})\big]
\Pi_N H(U(t_{n}) ) \notag\\
&\,
+ \Pi_N [I_2(t_n)+R_*(t_n)+R_3(t_n)] ,
\end{align}
the difference between \eqref{2nd-expr-3} and \eqref{space-2} yields the following error equation:
\begin{align}\label{error-2}
E^{n+1}_N
=&\,
e^{\tau L}E^n_N
+ \frac{\tau}{2} e^{\tau L} \Pi_N ( F(U(t_{n})) - F(U^n_N) ) \notag\\
&\, + (2L)^{-1} \big[ \tau e^{\tau L} - (2L)^{-1}(e^{\tau L} - e^{-\tau L})\big]
\Pi_N \big( H(U(t_{n}) - H(U^n_N) \big)  \notag\\
&\, + \Pi_N I_2(t_n) + \Pi_N R_*(t_n)  + \Pi_N R_3(t_n) + R_5(t_n) + R_6(t_n) ,
\end{align}
with
\begin{align}
R_5(t_n)
=&\,
\frac{\tau}{2} e^{\tau L} ( \Pi_N  - I_N ) F(U^n_N) , \notag\\
R_6(t_n)
=&\,
(2L)^{-1} \big[ \tau e^{\tau L} - (2L)^{-1}(e^{\tau L} - e^{-\tau L})\big]
( \Pi_N  - I_N) H(U^n_N) .
\end{align}

The following result shows that for a solution bounded in the energy space $H^1_0(\Omega)\times L^2(\Omega)$ the proposed numerical method can have second-order convergence in time and first-order convergence in space in the same energy space.

\begin{theorem}\label{THM:2nd-H1}
For $d=1,2,3$ and $U\in L^\infty(0,T;H^{1+\frac{d}{4}}(\Omega)\cap H^1_0(\Omega)\times H^{\frac{d}{4}}(\Omega))$, the numerical solution given by \eqref{space-2}, with initial value $U^0_N=\Pi_NU(t_n)$, has the following error bound:
\begin{align}
\max_{0\le n\le T/\tau} \|E_N^n\|_1 \lesssim
&\,  \tau^2 +N^{-1-\frac{d}{4}} .
\label{H1-error}
\end{align}
\end{theorem}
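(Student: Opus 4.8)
The plan is to read the result off from the error recursion \eqref{error-2} by a discrete stability plus consistency argument, handling the temporal and spatial errors simultaneously. Since $U_N^0=\Pi_NU(t_0)$ gives $E_N^0=0$, I would iterate \eqref{error-2}, using that $e^{\tau L}$ is an isometry in the energy norm and that the Fourier truncation $\Pi_N$ is bounded on every $H^s$, to obtain
\[
\|E_N^n\|_1 \le \sum_{m=0}^{n-1}\Big( \big\| \tfrac{\tau}{2}e^{\tau L}\Pi_N(F(U(t_m))-F(U_N^m))\big\|_1 + \big\|\mathcal A_\tau\,\Pi_N(H(U(t_m))-H(U_N^m))\big\|_1 + \rho_m\Big),
\]
where $\mathcal A_\tau:=(2L)^{-1}[\tau e^{\tau L}-(2L)^{-1}(e^{\tau L}-e^{-\tau L})]$ is the correction operator and $\rho_m$ collects the remainders $\Pi_N I_2(t_m)$, $\Pi_N R_*(t_m)$, $\Pi_N R_3(t_m)$, $R_5(t_m)$, $R_6(t_m)$. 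The two nonlinear differences will be absorbed by a discrete Gronwall inequality, while $\rho_m$ will be shown to be $O(\tau^3+\tau N^{-1-\frac{d}{4}})$ per step, so that summing over $n\lesssim T/\tau$ steps yields the claimed bound $\tau^2+N^{-1-\frac{d}{4}}$.

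The analytic heart of the proof is a smoothing estimate for the correction operator, namely $\|\mathcal A_\tau W\|_1 \lesssim \tau\,\|W\|_0$. I would prove it by diagonalizing $L$ on the Fourier sine basis: on the eigenspace where $-\Delta$ has eigenvalue $\omega^2$ the operator $L$ has eigenvalues $\pm i\omega$, and $\mathcal A_\tau$ acts as the scalar multiplier
\[
g(i\omega)=\frac{\tau e^{i\tau\omega}}{2i\omega}-\frac{e^{i\tau\omega}-e^{-i\tau\omega}}{(2i\omega)^2},\qquad |g(\pm i\omega)|\lesssim \min\{\tau^2,\tau/\omega\}\le \tau/\omega .
\]
Since passing from $\|\cdot\|_0$ to $\|\cdot\|_1$ costs exactly one power of $\omega$ on each mode (the first component is weighted by $\omega$, the second de-weighted by $\omega^{-1}$), the decay $|g|\lesssim\tau/\omega$ produces a gain of one derivative with prefactor $\tau$. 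This smoothing is precisely what renders the correction term harmless at the energy level.

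With this in hand I would estimate the nonlinear differences. For $F=(0,f(u))^\top$ the Lipschitz bound on $f$, the splitting $u(t_m)-u_N^m=(u(t_m)-\Pi_N u(t_m))+(\Pi_N u(t_m)-u_N^m)$, and the projection error $\|u(t_m)-\Pi_N u(t_m)\|_{L^2}\lesssim N^{-1-\frac{d}{4}}\|u(t_m)\|_{H^{1+\frac{d}{4}}}$ give $\|\tfrac{\tau}{2}e^{\tau L}\Pi_N(F(U(t_m))-F(U_N^m))\|_1\lesssim \tau(\|E_N^m\|_1+N^{-1-\frac{d}{4}})$. The genuinely delicate term is the one involving $H=(-f(u),f'(u)v)^\top$ for $d=2,3$: measured in the energy norm its second component would require an $L^4$ control of $v_N^m$, which is \emph{not} available in the energy space, and this is the main obstacle. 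It is exactly what the smoothing estimate circumvents: combining $\|\mathcal A_\tau W\|_1\lesssim\tau\|W\|_0$ with the weaker norm, I only need $\|H(U(t_m))-H(U_N^m)\|_0$. Its first component is handled as for $F$; for the second I would pair against $\phi\in H^1\hookrightarrow L^6$ ($d\le3$) and split $f'(u)v-f'(u_N)v_N=f'(u)(v-v_N)+(f'(u)-f'(u_N))v_N$, so that Hölder with $\tfrac13+\tfrac12+\tfrac16=1$ and the bounds \eqref{condition-f} on $f',f''$ give
\[
\|f'(u(t_m))v(t_m)-f'(u_N^m)v_N^m\|_{H^{-1}} \lesssim \|v(t_m)-v_N^m\|_{L^2}+\|u(t_m)-u_N^m\|_{L^3}\|v_N^m\|_{L^2}\lesssim \|E_N^m\|_1+N^{-1-\frac{d}{4}},
\]
using only the $L^2$-bound of $v_N^m$. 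Hence this contribution is also $O(\tau(\|E_N^m\|_1+N^{-1-\frac{d}{4}}))$.

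Finally I would control the remainders via Section \ref{section:2nd}: $\|I_2(t_m)\|_1$ and $\|R_3(t_m)\|_1$ are $O(\tau^3)$ by \eqref{R4} and \eqref{tR3}, while $\|R_*(t_m)\|_1\lesssim\tau^3\|U(t_m)\|_{1+\frac{d}{4}}^2=O(\tau^3)$ under $U\in L^\infty(0,T;H^{1+\frac{d}{4}}\times H^{\frac{d}{4}})$, which is where the $H^{1+\frac{d}{4}}$ regularity enters through $H^{1+\frac{d}{4}}\hookrightarrow W^{1,4}$. The aliasing remainders $R_5,R_6$ are treated by the standard interpolation bound $\|(\Pi_N-I_N)g\|_{L^2}\lesssim N^{-1-\frac{d}{4}}\|g\|_{H^{1+\frac{d}{4}}}$ (valid since $1+\tfrac{d}{4}>\tfrac{d}{2}$ for $d\le3$) applied to $g=F(U_N^m),H(U_N^m)$; to bound $\|g\|_{H^{1+\frac{d}{4}}}$ uniformly in $N$ I would propagate an $H^{1+\frac{d}{4}}\times H^{\frac{d}{4}}$ a priori bound on $U_N^m$ along the scheme (the free flow $e^{\tau L}$ preserves this norm and the corrections grow it by $O(\tau)$, via Moser/fractional-Leibniz estimates under \eqref{condition-f}), which also supplies the energy bound $\|v_N^m\|_{L^2}\lesssim1$ used above and closes by discrete Gronwall starting from $\|U_N^0\|_{1+\frac{d}{4}}\lesssim\|U^0\|_{1+\frac{d}{4}}$. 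Inserting everything into the iterated recursion gives
\[
\|E_N^n\|_1 \lesssim \tau\sum_{m=0}^{n-1}\|E_N^m\|_1 + T\big(\tau^2+N^{-1-\frac{d}{4}}\big),
\]
and the discrete Gronwall inequality yields $\max_{0\le n\le T/\tau}\|E_N^n\|_1\lesssim\tau^2+N^{-1-\frac{d}{4}}$. The principal difficulty is the $H$-term in $d=2,3$, resolved by the smoothing of $\mathcal A_\tau$; the secondary technical point is the propagation of the $H^{1+\frac{d}{4}}$ bound needed to control the spatial aliasing.
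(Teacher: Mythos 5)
Your overall route is the same as the paper's: you use the error recursion \eqref{error-2}, the consistency bounds \eqref{tR3}, \eqref{R4} and the $R_*$ estimate from Section \ref{section:2nd}, aliasing bounds for $R_5,R_6$, an a priori bound on the numerical solution, and a discrete Gronwall closure with per-step budget $O(\tau^3+\tau N^{-1-\frac{d}{4}})$. Your smoothing estimate $\|\mathcal A_\tau W\|_1\lesssim\tau\|W\|_0$ for $\mathcal A_\tau=(2L)^{-1}\big[\tau e^{\tau L}-(2L)^{-1}(e^{\tau L}-e^{-\tau L})\big]$ is exactly the mechanism the paper uses implicitly in its bound for $R_6$ (there it appears as $\|(2L)^{-1}W\|_1\lesssim\|W\|_0$ followed by the $O(\tau)$ bound on the bracket). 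The only structural deviation is that you propagate an $H^{1+\frac{d}{4}}\times H^{\frac{d}{4}}$ bound on $U^n_N$, whereas the paper inducts only on the energy bound \eqref{induction-H1} and converts derivatives into powers of $N$ by inverse inequalities in \eqref{R6}--\eqref{R7}; both are viable, and yours is arguably more transparent for controlling the aliasing terms.

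However, your stability estimate for the correction term does not deliver the stated spatial rate, and this is a genuine gap. Splitting $U(t_m)-U^m_N=(U(t_m)-\Pi_NU(t_m))+E^m_N$, your bound $\|f'(u)(v-v^m_N)\|_{H^{-1}}\lesssim\|v(t_m)-v^m_N\|_{L^2}$ discards precisely the negative-order gain that the $\|\cdot\|_0$ framework exists to retain: the projection part obeys only $\|v(t_m)-\Pi_Nv(t_m)\|_{L^2}\lesssim N^{-\frac{d}{4}}\|v\|_{H^{\frac{d}{4}}}$, not $N^{-1-\frac{d}{4}}$, so after multiplying by $\tau$ and summing over $T/\tau$ steps you obtain $\tau^2+N^{-\frac{d}{4}}$, a full power of $N$ short of the claim. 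The other piece has the same defect in milder form: your H\"older pairing requires $\|u(t_m)-\Pi_Nu(t_m)\|_{L^3}$, which via $H^{\frac{d}{6}}\hookrightarrow L^3$ is only $O(N^{-1-\frac{d}{12}})$, again larger than $N^{-1-\frac{d}{4}}$. The repair stays inside your toolbox: treat the projection parts by duality without ever leaving $H^{-1}$, namely $\langle f'(u)(v-\Pi_Nv),\phi\rangle=\langle v-\Pi_Nv,\,f'(u)\phi\rangle\le\|v-\Pi_Nv\|_{H^{-1}}\|f'(u)\phi\|_{H^1}$, where $\|v-\Pi_Nv\|_{H^{-1}}\lesssim N^{-1-\frac{d}{4}}\|v\|_{H^{\frac{d}{4}}}$ and $\|f'(u)\phi\|_{H^1}\lesssim(1+\|\nabla u\|_{L^3})\|\phi\|_{H^1}\lesssim(1+\|u\|_{H^{1+\frac{d}{4}}})\|\phi\|_{H^1}$ by \eqref{condition-f}; and in the remaining piece swap the Lebesgue exponents so that $u-u^m_N$ is measured in $L^2$ (where the projection error is $N^{-1-\frac{d}{4}}$) against $v$ in $L^3$, i.e.\ split as $(f'(u)-f'(u^m_N))v+f'(u^m_N)(v-v^m_N)$ and use $\partial_tu\in H^{\frac{d}{4}}\hookrightarrow L^3$ (or your propagated $H^{\frac{d}{4}}$ bound on $v^m_N$). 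A secondary caution of the same flavor: in $R_6$ the second component $f'(u^m_N)v^m_N$ lies only in $H^{\frac{d}{4}}$-type spaces, not $H^{1+\frac{d}{4}}$, so its aliasing error must likewise be taken in $H^{-1}$ (as the paper does), not in $L^2$. With these repairs, each step contributes $\tau\big(\|E^m_N\|_1+\tau^2+N^{-1-\frac{d}{4}}\big)$ and your Gronwall argument gives the theorem as stated.
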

\begin{proof}
If $U\in L^\infty(0,T;H^{1+\frac{d}{4}}(\Omega)\cap H^1_0(\Omega)\times H^{\frac{d}{4}}(\Omega))$ then \eqref{tR3} and \eqref{R4} imply that the remainders $\Pi_N I_2(t_n)$ and $\Pi_N R_3(t_n)$ satisfy the following estimates:
\begin{align}\label{R3R4}
\|\Pi_N I_2(t_n)\|_1 + \|\Pi_N R_*(t_n)\|_1 + \|\Pi_N R_3(t_n)\|_1
\lesssim &\, \tau^3 \quad\mbox{in the case $d=1,2,3$}.
\end{align}
The remainders $R_5(t_n)$ and $R_6(t_n)$ can be estimated by using mathematical induction on $n$: assuming that
\begin{align}\label{induction-H1}
\|U^n_N\|_1 \le \|\Pi_N U(t_n)\|_1 + 1
\end{align}
we shall prove the following results:
\begin{align}\label{induction-H1-2}
\|U^{n+1}_N\|_1 \le \|\Pi_N U(t_{n+1})\|_1 + 1
\quad\mbox{and}\quad
\|E_N^n\|_1 \lesssim \tau+N^{-1} .
\end{align}
Under assumption \eqref{induction-H1} we have
\begin{align*} 
\| R_5(t_n) \|_1
\lesssim&\, \tau \| ( \Pi_N  - I_N ) F(U^n_N)  \|_1 \notag\\
\lesssim&\, \tau \| ( \Pi_N  - I_N ) f(u^n_N)  \|_{L^2} \notag\\
\lesssim&\, \tau N^{-2} \|f(u^n_N)\|_{H^2} \notag\\
\lesssim&\, \tau N^{-2} \|f'(u^n_N)\nabla^2 u^n_N + f''(u^n_N)\nabla u^n_N\otimes \nabla u^n_N  \|_{L^2} \notag\\
\lesssim&\, \tau N^{-2} ( \|u^n_N\|_{H^2} + \|\nabla u^n_N\|_{L^4}^2 ) \\[5pt]
\| R_6(t_n) \|_1 
=&\,  \big\| (2L)^{-1}\big[ \tau e^{\tau L} - (2L)^{-1}(e^{\tau L} - e^{-\tau L})\big]
( \Pi_N  - I_N) H(U^n_N)\big\|_1 \notag\\
\lesssim&\, \big\|\big[ \tau e^{\tau L} - (2L)^{-1}(e^{\tau L} - e^{-\tau L})\big]
( \Pi_N  - I_N) H(U^n_N)\big\|_0 \notag\\
\lesssim&\, \tau \big\| ( \Pi_N  - I_N) H(U^n_N)\big\|_0 \notag\\
\lesssim&\, \tau N^{-2}
(\|f(u^n_N)\|_{H^2}+ \| f'(u^n_N) v^n_N\|_{H^1}) \notag\\
\lesssim&\, \tau N^{-2}
\|f'(u^n_N)\nabla^2 u^n_N + f''(u^n_N)\nabla u^n_N\otimes \nabla u^n_N  \|_{L^2} ) \notag\\
&\,
+ \tau N^{-2} (\| f''(u^n_N)  v^n_N\nabla u^n_N + f'(u^n_N)\nabla v^n_N \|_{L^2} ) \notag\\
\lesssim&\, \tau N^{-2}
(\| u^{n}_N \|_{H^2} + \| \nabla u^{n}_N \|_{L^4}^2)
+ \tau N^{-2} ( \| v^{n}_N \|_{L^4}\| \nabla u^{n}_N \|_{L^4}
+\| \nabla v^{n}_N \|_{L^2} ) .
\end{align*}
In the case $d=1,2,3$, the Sobolev interpolation inequality
$ \|\nabla u^n_N\|_{L^4}\le  \|  u^{n}_N \|_{H^{1+\frac{d}{4}}}$ implies that
\begin{align} \label{R6}
\| R_5(t_n) \|_1
\lesssim&\, \tau N^{-2} ( \|u^n_N\|_{H^2} +  \|  u^{n}_N \|_{H^{1+\frac{d}{4}}}^2 ) \notag\\
\lesssim&\, \tau N^{-1-\frac{d}{4}} ( \|u^n_N\|_{H^{1+\frac{d}{4}}} + \| u^n_N\|_{H^1}^2 ) , \\[5pt]
\| R_6(t_n) \|_1 \label{R7}
\lesssim&\, \tau N^{-2}
(\| u^{n}_N \|_{H^2} + \| v^{n}_N \|_{H^1}  + \|  u^{n}_N \|_{H^{1+\frac{d}{4}}}^2 + \|  v^{n}_N \|_{H^{\frac{d}{4}}}^2) \notag\\
\lesssim&\, \tau N^{-1-\frac{d}{4}}
(\| u^{n}_N \|_{H^{1+\frac{d}{4}}} + \|  v^{n}_N \|_{H^{\frac{d}{4}}}
+ \|  u^{n}_N \|_{H^{1+\frac{d}{4}}}^2 + \|  v^{n}_N \|_{H^{\frac{d}{4}}}^2 ) .
\end{align}
%
By using these estimates and taking the energy norm $|\cdot|_1$ on both sides of \eqref{error-2}, we obtain
\begin{align}
| E^{n+1}_N |_1
\le (1 + C\tau) | E^n_N |_1 + C\tau (\tau^2+N^{-1-\frac{d}{4}})  .
\end{align}
Then, using Gronwall's inequality and the equivalence of norms $|\cdot|_1\sim \|\cdot\|_1$ on the energy space $H^1_0(\Omega)\times L^2(\Omega)$, we obtain the following error bound:
\begin{align}
\| E^{n+1}_N \|_1
\lesssim &\, \tau^2+N^{-1-\frac{d}{4}} .
\end{align}
There exist some positive constants $\tau_0$ and $N_0$ such that for $\tau\le\tau_0$ and $N\ge N_0$ we obtain
\begin{align}
\| E^{n+1}_N \|_1
\le 1 .
\end{align}
This proves \eqref{induction-H1-2} (with an additional triangle inequality).
\end{proof}


\begin{remark}
\upshape
By passing to the limit $N\rightarrow\infty$ in Theorem \ref{THM:2nd-H1},
one can obtain the semi-discretization results in Theorem \ref{THM:2nd-H1-TD}.
\end{remark}

\begin{remark}\label{remarkTHM}
\upshape
In the case $d=1$, the remainder $R_*(t_n)$ can be estimated by using \eqref{R-star-d=1}, which yields the following result:
\begin{align}
\max_{0\le n\le T/\tau} \|E_N^n\|_1 \lesssim
&\,  \tau^{\frac{5}{3}-\epsilon}  +N^{-1} \quad\mbox{(for any fixed $\epsilon>0$)} .
\label{H1-error-d=1}
\end{align}
This result holds under the weaker regularity condition
$U\in C([0,T];H^1_0(\Omega)\times L^2(\Omega))$, i.e.,
the numerical solution has higher-order convergence in the energy space
without requiring additional regularity in the solution.
\end{remark}

\begin{remark}
\upshape
For any given initial value $(u^0,v^0)\in H^{1+\frac{d}{4}}(\Omega)\cap H^1_0(\Omega)\times H^{\frac{d}{4}}(\Omega)$,
Theorem \ref{THM:2nd-H1} states that the error of the numerical solution is as follows:
$$
\|\Pi_N u(t_n)-u^n_N\|_{H^1}+\|\Pi_N v(t_n)-v^n_N\|_{L^2}\lesssim \tau^2+N^{-1-\frac{d}{4}} ,
$$
which is a superconvergence result that much better than the regularity of the solution
in both time and space.
In general, for any fixed $t$, the projection error in space satisfies
$$
\|\Pi_N u(t)-u(t)\|_{H^1}+\|\Pi_N v(t)-v(t)\|_{L^2} \lesssim N^{-\frac{d}{4}} .
$$
\end{remark}

\section{Numerical experiments}\label{sec:numExp}


In this section we present numerical experiments to support the theoretical analysis and to illustrate the performance of our new method in \eqref{2nd-method} on the semilinear Klein--Gordon equation \eqref{pde} in a one-dimensional domain $\Omega=[0,1]$ with $f(x) = \sin(x)$, using $N = 2^{12}$ terms of a Fourier space discretization.
As for the initial state of the differential equation, we generate, as described in Section 5.1 of \cite{OS18}, random initial data $u^0$ and $u_t^0$ from the space $H^{\theta}(\Omega)$ such that $\|u^0\|_{L^2}= 1$ and $\|u_t^0\|_{L^2}= 1$. In particular, we are interested in comparing the smooth case ($\theta \rightarrow \infty$) with the low-regularity case ($\theta = 1$).

Our new method is tested in comparison with several well-established numerical techniques for the semilinear Klein--Gordon equation.
To define them, it is useful to introduce the operator $\Sigma=\sqrt{-\Delta}$, which satisfies that $\Delta = -\Sigma^2$. This is because the exponential of our linear operator can be easily expressed as
\[
\exp(L)
=
\exp
\left(t
\begin{pmatrix}
       0 & 1 \\
\Delta &   0 \\
\end{pmatrix}
\right)
=
\exp
\left(t
\begin{pmatrix}
       0 & 1 \\
-\Sigma^2 &   0 \\
\end{pmatrix}
\right)
=
\begin{pmatrix}
       \cos( t \Sigma ) & t \sinc( t \Sigma ) \\
-\Sigma\sin( t \Sigma ) &    \cos( t \Sigma ) \\
\end{pmatrix}
.
\]
This expression is worth using only in case the operator $\Delta$ can be discretized in space by means of a diagonal matrix or if the resulting discretization matrix's size is particularly modest.
In fact, in the other cases, computing the matrix square root is generally unfeasible.
The above-mentioned numerical techniques are:\medskip
\begin{itemize}
\item The second-order low-regularity exponential-type scheme from \cite{RS21}, that we refer to as \texttt{rs21}.
This method computes approximations $u^{n+1}$, $v^{n+1}$ to $u(t_{n+1})$, $u_t(t_{n+1})$ at discrete times $t_{n+1}= t_0 +(n+1)\tau$ with the time step size $\tau$ as
\[
\begin{pmatrix}
u^{n+\frac12} \\
v^{n+\frac12}
\end{pmatrix}
=
\exp
\left(\tau
\begin{pmatrix}
0 & 1 \\
\Delta &   0 \\
\end{pmatrix}
\right)
\begin{pmatrix}
u^{n} \\
v^{n}
\end{pmatrix},
\]
\[
\begin{pmatrix}
u^{n+1} \\
v^{n+1}
\end{pmatrix}
=
\begin{pmatrix}
u^{n+\frac12} \\
v^{n+\frac12}
\end{pmatrix}
+ \frac{\tau}{2}
\left(
\exp
\left(\tau
\begin{pmatrix}
0 & 1 \\
\Delta &   0 \\
\end{pmatrix}
\right)
\begin{pmatrix}
0 \\
\sin(u^{n})
\end{pmatrix}
+
\begin{pmatrix}
0 \\
\sin(u^{n+\frac12})
\end{pmatrix}
\right).
\]\medskip
\item The recent second-order IMEX method for semilinear second-order wave equations from \cite{HL2021}, that we refer to as \texttt{hl21}.
This method computes approximations $u^{n+1}$, $v^{n+1}$ to $u(t_{n+1})$, $u_t(t_{n+1})$ at discrete times $t_{n+1}= t_0 +(n+1)\tau$ with the time step size $\tau$ as
\[
\begin{split}
v^{n+\frac12} &=
\left( 1 - \frac{\tau^2}{4}\Delta \right)^{-1}
\left( v^{n} + \frac{\tau}{2}\sin(u^n) + \frac{\tau}{2}\Delta u^{n}  \right), \\
u^{n+1}       &=  u^{n} + \tau v^{n+\frac12}, \\
v^{n+1}       &= 2v^{n+\frac12} -v^{n} + \frac{\tau}{2}\left( \sin(u^{n+1}) - \sin(u^n) \right). \\
\end{split}
\]\medskip
\item Another natural choice for measuring the performances of our scheme is the class of second-order trigonometric integrators expressly designed for the discretization in time of the spatially discrete nonlinear Klein--Gordon equation with periodic boundary conditions.
This class of trigonometric integrators computes approximations $u^{n+1}$, $v^{n+1}$ to $u(t_{n+1})$, $u_t(t_{n+1})$ at discrete times $t_{n+1}= t_0 +(n+1)\tau$ with the time stepsize $\tau$ as
\[
\begin{pmatrix}
u^{n+1} \\
v^{n+1}
\end{pmatrix}
=
\exp
\left(\tau
\begin{pmatrix}
0 & 1 \\
\Delta &   0 \\
\end{pmatrix}
\right)
\begin{pmatrix}
u^n \\
v^n
\end{pmatrix}
+
\frac{\tau}{2}
\begin{pmatrix}
\tau \Psi \sin(\Psi u^n) \\
\Psi_0 \sin(\Psi u^n)
+
\Psi_1 \sin(\Psi u^n)
\end{pmatrix}.
\]
The matrices $\Phi, \Psi, \Psi_0$, and $\Psi_1$ are filters defined by
\[
\Phi   = \phi  (\tau\Sigma),\quad
\Psi   = \psi  (\tau\Sigma),\quad
\Psi_0 = \psi_0(\tau\Sigma),\quad
\Psi_1 = \psi_1(\tau\Sigma)
\]
with filter functions $\phi, \psi, \psi_0$, and $\psi_1$ that satisfy $\phi(0) = \psi(0) = \psi_0(0) = \psi_1(0) = 1$. The choice of such filters uniquely characterizes a method.
For even filter functions, the method is symmetric if and only if
\begin{equation}
\label{eq:lgpsiphi}
\psi(x) = \sinc(x) \psi_1(x), \quad \psi_0(x) = \cos(x) \psi_1(x),
\end{equation}
and it is symplectic if and only if
\[
\psi(x) = \sinc(x)\phi(x).
\]
Popular choices of the filter functions are
\begin{itemize}
\item[$(B)$] The one with $\psi(x) = \sinc  (x), \phi(x) = 1       $, $\psi_0$ and $\psi_1$ as in \eqref{eq:lgpsiphi}. This is the impulse method by Deuflhard \cite{D79}. 
\item[$(C)$] The one with $\psi(x) = \sinc^2(x), \phi(x) = \sinc(x)$, $\psi_0$ and $\psi_1$ as in \eqref{eq:lgpsiphi}. This is the mollified impulse method by Garc\'ia-Archilla, Sanz-Serna \& Skeel \cite{GASSS98}. 
\item[$(E)$] The one with $\psi(x) = \sinc^2(x), \phi(x) = 1       $, $\psi_0$ and $\psi_1$ as in \eqref{eq:lgpsiphi}. This is the trigonometric exponential-type integrator by Hairer \& Lubich\cite{HL00}. 
\item[$(G)$] The one with $\psi(x) = \sinc^3(x), \phi(x) = \sinc(x)$, $\psi_0$ and $\psi_1$ as in \eqref{eq:lgpsiphi}. This is the trigonometric exponential-type integrator by Grimm \& Hochbruck \cite{GH06}. 
\item[$(\tilde{B})$] The one with $\psi(x) = \chi_{[-\pi,\pi]}(x)\sinc(x), \phi(x) = \chi_{[-\pi,\pi]}(x)$ $\psi_0$ and $\psi_1$ as in \eqref{eq:lgpsiphi}. This is the method introduced by Gauckler \cite{G15}. 
\end{itemize}
For a precise overview and for more information on this class of trigonometric methods we refer the reader to~\cite{G15}.
In our tests it turned out that the methods $B$ and $\tilde B$ are neatly superior to all the other options, therefore we will only include these two into the data presentation, referring to them as, respectively, \texttt{d79} and \texttt{g15}.\medskip
\item The second order classical Strang splitting scheme from \cite{S1968}, that we refer to as \texttt{ss68}.
This method computes approximations $u^{n+1}$, $v^{n+1}$ to $u(t_{n+1})$, $u_t(t_{n+1})$ at discrete times $t_{n+1}= t_0 +(n+1)\tau$ with the time step size $\tau$ as
\[
\begin{pmatrix}
u^{n+\frac12} \\
v^{n+\frac12}
\end{pmatrix}
=
\exp
\left(\frac{\tau}{2}
\begin{pmatrix}
0 & 1 \\
\Delta &   0 \\
\end{pmatrix}
\right)
\begin{pmatrix}
u^{n} \\
v^{n}
\end{pmatrix},
\]
\[
\begin{pmatrix}
u^{n+1} \\
v^{n+1}
\end{pmatrix}
=
\exp
\left(\frac{\tau}{2}
\begin{pmatrix}
0 & 1 \\
\Delta &   0 \\
\end{pmatrix}
\right)
\begin{pmatrix}
u^{n+\frac12} \\
v^{n+\frac12} + \tau \sin(u^{n+\frac12})
\end{pmatrix}.
\]
\end{itemize}

Finally, we did not consider any method applied in combination with the rewriting suggested in formula \eqref{formula:rootrewriting}, in fact our numerical experience tells that in this case such procedure would not prove to be efficient.

The $H^1(\Omega)\times L^2(\Omega)$ errors of the numerical solutions given by the above-mentioned methods and our new method, the corrected Lie method (which we refer to as {\tt c\_lie}), are presented Figure \ref{fig:sineKG_th16} and Figure \ref{fig:sineKG_th1} for smooth initial data and nonsmooth $H^1(\Omega)\times L^2(\Omega)$ initial data, respectively. The numerical results in Figure \ref{fig:sineKG_th16} indicate that all methods have second-order convergence for sufficiently smooth initial data. The numerical results in Figure \ref{fig:sineKG_th1} shows that the new method proposed in this article has second-order convergence for the nonsmooth $H^1(\Omega)\times L^2(\Omega)$ initial data, while all other second-order methods are practically first-order convergent in this nonsmooth case.

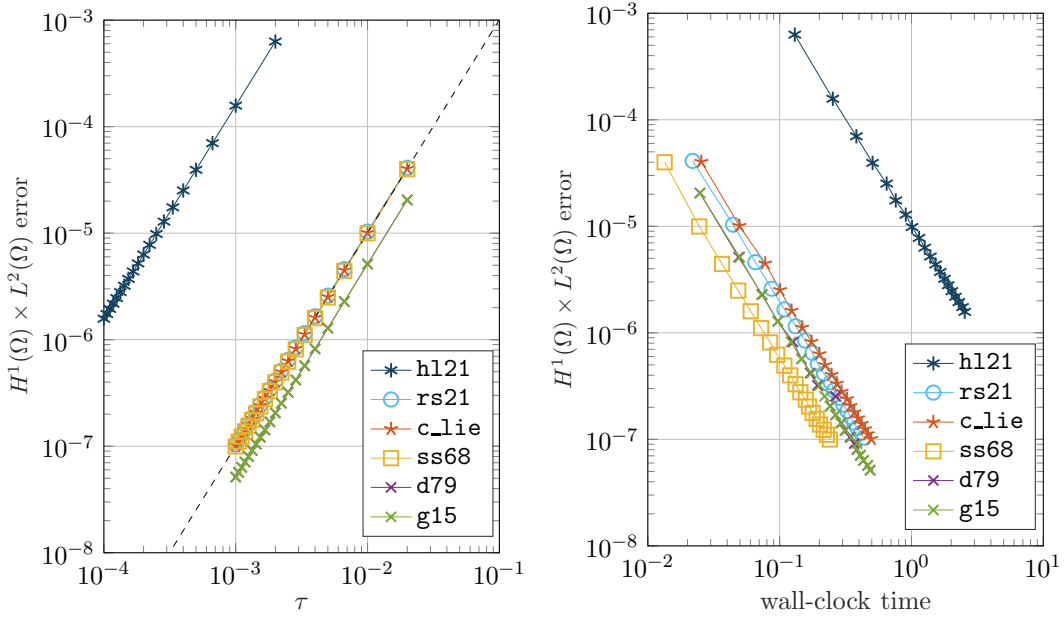
\begin{figure}[htp]
\scalebox{.9}{
%
%
\definecolor{color_rs21}{rgb}{0.30100,0.74500,0.93300}%
\definecolor{color_lsz}{rgb}{0.85000,0.32500,0.09800}%
\definecolor{color_ss68}{rgb}{0.92900,0.69400,0.12500}%
\definecolor{color_d79}{rgb}{0.49400,0.18400,0.55600}%
\definecolor{color_g15}{rgb}{0.46600,0.67400,0.18800}%
\definecolor{color_hl21}{rgb}{0.08240,0.26270,0.37650}%
\begin{tikzpicture}

\begin{axis}[%
width=2.300in,
height=3.100in,
at={(1.011in,0.642in)},
scale only axis,
xmode=log,
xmin=0.0001,
xmax=0.1,
xminorticks=true,
xlabel style={font=\color{white!15!black}},
xlabel={$\tau$},
ymode=log,
ymin=1.0e-08,
ymax=0.001,
yminorticks=true,
ylabel style={font=\color{white!15!black}},
ylabel={{\small $H^1(\Omega) \times L^2(\Omega)$ error}},
axis background/.style={fill=white},
xmajorgrids,
ymajorgrids,
legend style={at={(0.985,0.03)}, anchor=south east, legend cell align=left, align=left, draw=white!15!black}
]

\addplot [color=color_hl21, mark=asterisk, mark options={solid, color_hl21,thick,scale=1.5}]
  table[row sep=crcr]{%
0.002	0.00062965522136628\\
0.001	0.000157416677343595\\
0.000666666666666667	6.99632162128989e-05\\
0.0005	3.93543660645576e-05\\
0.0004	2.51868167624773e-05\\
0.000333333333333333	1.74908576358628e-05\\
0.000285714285714286	1.28504348503568e-05\\
0.00025	9.83862105606754e-06\\
0.000222222222222222	7.77373103775931e-06\\
0.0002	6.29672719624913e-06\\
0.000181818181818182	5.20391121092504e-06\\
0.000166666666666667	4.37273496685835e-06\\
0.000153846153846154	3.72588446389118e-06\\
0.000142857142857143	3.212628297873e-06\\
0.000133333333333333	2.79855937699573e-06\\
0.000125	2.45967426920317e-06\\
0.000117647058823529	2.17881470509863e-06\\
0.000111111111111111	1.94345158793849e-06\\
0.000105263157894737	1.74426372644879e-06\\
0.0001	1.57420046339118e-06\\
};
\addlegendentry{\texttt{hl21}}

\addplot [color=color_rs21, mark=o, mark options={solid, color_rs21,thick,scale=1.5}]
  table[row sep=crcr]{%
0.02	4.11075806568728e-05\\
0.01	1.03522656246232e-05\\
0.00666666666666667	4.60733454601047e-06\\
0.005	2.59279288160165e-06\\
0.004	1.65970041052919e-06\\
0.00333333333333333	1.15267247557078e-06\\
0.00285714285714286	8.46898611802093e-07\\
0.0025	6.48420109250919e-07\\
0.00222222222222222	5.12335721521192e-07\\
0.002	4.14991580105026e-07\\
0.00181818181818182	3.42966107456722e-07\\
0.00166666666666667	2.88183914114039e-07\\
0.00153846153846154	2.45549979746621e-07\\
0.00142857142857143	2.1172099795214e-07\\
0.00133333333333333	1.84429330166853e-07\\
0.00125	1.62093065365657e-07\\
0.00117647058823529	1.43581231278487e-07\\
0.00111111111111111	1.28068139138161e-07\\
0.00105263157894737	1.14939391766447e-07\\
0.001	1.0373027468517e-07\\
};
\addlegendentry{\texttt{rs21}}

\addplot [color=color_lsz, mark=star, mark options={solid, color_lsz,thick,scale=1.5}]
  table[row sep=crcr]{%
0.02	4.02854793247987e-05\\
0.01	1.00875269957212e-05\\
0.00666666666666667	4.48481779777051e-06\\
0.005	2.52300104641547e-06\\
0.004	1.61480270349603e-06\\
0.00333333333333333	1.12141792139945e-06\\
0.00285714285714286	8.2390784462645e-07\\
0.0025	6.30806452069714e-07\\
0.00222222222222222	4.98414048485665e-07\\
0.002	4.03713192605958e-07\\
0.00181818181818182	3.33644512247111e-07\\
0.00166666666666667	2.80351121414411e-07\\
0.00153846153846154	2.38876090732305e-07\\
0.00142857142857143	2.05966754394187e-07\\
0.00133333333333333	1.79417062410213e-07\\
0.00125	1.5768806774443e-07\\
0.00117647058823529	1.39679513029359e-07\\
0.00111111111111111	1.24588161481378e-07\\
0.00105263157894737	1.11816331321636e-07\\
0.001	1.00911910837585e-07\\
};
\addlegendentry{\texttt{c\_lie}}

\addplot [color=color_ss68, mark=square, mark options={solid, color_ss68,thick,scale=1.5}]
  table[row sep=crcr]{%
0.02	3.99159618338559e-05\\
0.01	9.96756339679678e-06\\
0.00666666666666667	4.42907721804812e-06\\
0.005	2.49116088107134e-06\\
0.004	1.59427966971982e-06\\
0.00333333333333333	1.10711069399703e-06\\
0.00285714285714286	8.13371932827046e-07\\
0.0025	6.22727731832249e-07\\
0.00222222222222222	4.92024285334989e-07\\
0.002	3.98533690318616e-07\\
0.00181818181818182	3.2936163099151e-07\\
0.00166666666666667	2.76750876015278e-07\\
0.00153846153846154	2.35807466519001e-07\\
0.00142857142857143	2.03320232908167e-07\\
0.00133333333333333	1.77111227233146e-07\\
0.00125	1.55661145060819e-07\\
0.00117647058823529	1.37883852170521e-07\\
0.00111111111111111	1.22986326133302e-07\\
0.00105263157894737	1.10378556093746e-07\\
0.001	9.96142631912341e-08\\
};
\addlegendentry{\texttt{ss68}}

\addplot [color=color_d79, mark=x, mark options={solid, color_d79,thick,scale=1.5}]
  table[row sep=crcr]{%
0.02	2.05402279832761e-05\\
0.01	5.13085795811599e-06\\
0.00666666666666667	2.28004966282204e-06\\
0.005	1.28247162756678e-06\\
0.004	8.20771470526284e-07\\
0.00333333333333333	5.69980890831017e-07\\
0.00285714285714286	4.18765343212946e-07\\
0.0025	3.20621943063987e-07\\
0.00222222222222222	2.53335700236259e-07\\
0.002	2.05206623579729e-07\\
0.00181818181818182	1.69596647955301e-07\\
0.00166666666666667	1.42512437372732e-07\\
0.00153846153846154	1.21434612704836e-07\\
0.00142857142857143	1.04710074419824e-07\\
0.00133333333333333	9.12175920507468e-08\\
0.00125	8.01749937247511e-08\\
0.00117647058823529	7.10231437935932e-08\\
0.00111111111111111	6.33538273654467e-08\\
0.00105263157894737	5.68632762858819e-08\\
0.001	5.13217256068615e-08\\
};
\addlegendentry{\texttt{d79}}

\addplot [color=color_g15, mark=x, mark options={solid, color_g15,thick,scale=1.5}]
  table[row sep=crcr]{%
0.02	2.05402279832761e-05\\
0.01	5.13085795811599e-06\\
0.00666666666666667	2.28004966282204e-06\\
0.005	1.28247162756678e-06\\
0.004	8.20771470526284e-07\\
0.00333333333333333	5.69980890831017e-07\\
0.00285714285714286	4.18765343212946e-07\\
0.0025	3.20621943063987e-07\\
0.00222222222222222	2.53335700236259e-07\\
0.002	2.05206623579729e-07\\
0.00181818181818182	1.69596647955301e-07\\
0.00166666666666667	1.42512437372732e-07\\
0.00153846153846154	1.21434612704836e-07\\
0.00142857142857143	1.04710074419824e-07\\
0.00133333333333333	9.12175920507468e-08\\
0.00125	8.01749937247511e-08\\
0.00117647058823529	7.10231437935932e-08\\
0.00111111111111111	6.33538273654467e-08\\
0.00105263157894737	5.68632762858819e-08\\
0.001	5.13217256068615e-08\\
};
\addlegendentry{\texttt{g15}}

\addplot [color=black, dashed]
  table[row sep=crcr]{%
9.765625000000000e-06     9.620215411874008e-12 \\
1.024000000000000e+01     1.057753870706535e+01 \\
};

\end{axis}
\end{tikzpicture}
%
%
\definecolor{color_rs21}{rgb}{0.30100,0.74500,0.93300}%
\definecolor{color_lsz}{rgb}{0.85000,0.32500,0.09800}%
\definecolor{color_ss68}{rgb}{0.92900,0.69400,0.12500}%
\definecolor{color_d79}{rgb}{0.49400,0.18400,0.55600}%
\definecolor{color_g15}{rgb}{0.46600,0.67400,0.18800}%
\definecolor{color_hl21}{rgb}{0.08240,0.26270,0.37650}%
\begin{tikzpicture}

\begin{axis}[%
width=2.300in,
height=3.100in,
at={(1.011in,0.642in)},
scale only axis,
xmode=log,
xmin=0.01,
xmax=10,
xminorticks=true,
xlabel style={font=\color{white!15!black}},
xlabel={wall-clock time},
ymode=log,
ymin=1.0e-08,
ymax=0.001,
yminorticks=true,
ylabel style={font=\color{white!15!black}},
ylabel={{\small $H^1(\Omega) \times L^2(\Omega)$ error}},
axis background/.style={fill=white},
xmajorgrids,
ymajorgrids,
legend style={at={(0.65,0.03)}, anchor=south west, legend cell align=left, align=left, draw=white!15!black}
]

\addplot [color=color_hl21, mark=asterisk, mark options={solid, color_hl21,thick,scale=1.5}]
  table[row sep=crcr]{%
0.130200355	0.00062965522136628\\
0.253077924	0.000157416677343595\\
0.381956928	6.99632162128989e-05\\
0.507022004	3.93543660645576e-05\\
0.648224572	2.51868167624773e-05\\
0.760130506	1.74908576358628e-05\\
0.905112248	1.28504348503568e-05\\
1.009722551	9.83862105606754e-06\\
1.141389611	7.77373103775931e-06\\
1.260759532	6.29672719624913e-06\\
1.391747632	5.20391121092504e-06\\
1.524026824	4.37273496685835e-06\\
1.652051764	3.72588446389118e-06\\
1.780289281	3.212628297873e-06\\
1.899374371	2.79855937699573e-06\\
2.0234	2.45967426920317e-06\\
2.153811882	2.17881470509863e-06\\
2.286770283	1.94345158793849e-06\\
2.44061979	1.74426372644879e-06\\
2.535613849	1.57420046339118e-06\\
};
\addlegendentry{\texttt{hl21}}

\addplot [color=color_rs21, mark=o, mark options={solid, color_rs21,thick,scale=1.5}]
  table[row sep=crcr]{%
0.021880986	4.11075806568728e-05\\
0.04437799	1.03522656246232e-05\\
0.065449898	4.60733454601047e-06\\
0.086864907	2.59279288160165e-06\\
0.108833812	1.65970041052919e-06\\
0.132445121	1.15267247557078e-06\\
0.156056645	8.46898611802093e-07\\
0.177350124	6.48420109250919e-07\\
0.194795145	5.12335721521192e-07\\
0.217141175	4.14991580105026e-07\\
0.244469042	3.42966107456722e-07\\
0.258730463	2.88183914114039e-07\\
0.283050065	2.45549979746621e-07\\
0.303984121	2.1172099795214e-07\\
0.326606723	1.84429330166853e-07\\
0.34696718	1.62093065365657e-07\\
0.367222822	1.43581231278487e-07\\
0.387456647	1.28068139138161e-07\\
0.412449978	1.14939391766447e-07\\
0.438735607	1.0373027468517e-07\\
};
\addlegendentry{\texttt{rs21}}

\addplot [color=color_lsz, mark=star, mark options={solid, color_lsz,thick,scale=1.5}]
  table[row sep=crcr]{%
0.025468903	4.02854793247987e-05\\
0.049533629	1.00875269957212e-05\\
0.077752529	4.48481779777051e-06\\
0.100596976	2.52300104641547e-06\\
0.123201334	1.61480270349603e-06\\
0.148156509	1.12141792139945e-06\\
0.174616967	8.2390784462645e-07\\
0.20045274	6.30806452069714e-07\\
0.221412763	4.98414048485665e-07\\
0.248285405	4.03713192605958e-07\\
0.271857113	3.33644512247111e-07\\
0.295659211	2.80351121414411e-07\\
0.324029348	2.38876090732305e-07\\
0.344897249	2.05966754394187e-07\\
0.368179529	1.79417062410213e-07\\
0.393577585	1.5768806774443e-07\\
0.418879642	1.39679513029359e-07\\
0.444675229	1.24588161481378e-07\\
0.465436505	1.11816331321636e-07\\
0.495036487	1.00911910837585e-07\\
};
\addlegendentry{\texttt{c\_lie}}

\addplot [color=color_ss68, mark=square, mark options={solid, color_ss68,thick,scale=1.5}]
  table[row sep=crcr]{%
0.013410757	3.99159618338559e-05\\
0.024560596	9.96756339679678e-06\\
0.036571217	4.42907721804812e-06\\
0.048499855	2.49116088107134e-06\\
0.060342604	1.59427966971982e-06\\
0.072306081	1.10711069399703e-06\\
0.08472447	8.13371932827046e-07\\
0.095716506	6.22727731832249e-07\\
0.108654693	4.92024285334989e-07\\
0.120287378	3.98533690318616e-07\\
0.132221259	3.2936163099151e-07\\
0.143585785	2.76750876015278e-07\\
0.157792699	2.35807466519001e-07\\
0.167166571	2.03320232908167e-07\\
0.178064621	1.77111227233146e-07\\
0.191028984	1.55661145060819e-07\\
0.202163646	1.37883852170521e-07\\
0.216038433	1.22986326133302e-07\\
0.228104353	1.10378556093746e-07\\
0.240136646	9.96142631912341e-08\\
};
\addlegendentry{\texttt{ss68}}

\addplot [color=color_d79, mark=x, mark options={solid, color_d79,thick,scale=1.5}]
  table[row sep=crcr]{%
0.024817393	2.05402279832761e-05\\
0.048880634	5.13085795811599e-06\\
0.073641982	2.28004966282204e-06\\
0.096888961	1.28247162756678e-06\\
0.126414647	8.20771470526284e-07\\
0.146618611	5.69980890831017e-07\\
0.170397959	4.18765343212946e-07\\
0.193744004	3.20621943063987e-07\\
0.265221462	2.53335700236259e-07\\
0.242492484	2.05206623579729e-07\\
0.265370419	1.69596647955301e-07\\
0.292488299	1.42512437372732e-07\\
0.315607211	1.21434612704836e-07\\
0.341553897	1.04710074419824e-07\\
0.368206998	9.12175920507468e-08\\
0.384728239	8.01749937247511e-08\\
0.409214059	7.10231437935932e-08\\
0.432739029	6.33538273654467e-08\\
0.464911373	5.68632762858819e-08\\
0.486563139	5.13217256068615e-08\\
};
\addlegendentry{\texttt{d79}}

\addplot [color=color_g15, mark=x, mark options={solid, color_g15,thick,scale=1.5}]
  table[row sep=crcr]{%
0.024868648	2.05402279832761e-05\\
0.049862783	5.13085795811599e-06\\
0.072942728	2.28004966282204e-06\\
0.096879408	1.28247162756678e-06\\
0.121901578	8.20771470526284e-07\\
0.146778406	5.69980890831017e-07\\
0.174162419	4.18765343212946e-07\\
0.200223271	3.20621943063987e-07\\
0.219093056	2.53335700236259e-07\\
0.241223774	2.05206623579729e-07\\
0.266694441	1.69596647955301e-07\\
0.290595881	1.42512437372732e-07\\
0.312285343	1.21434612704836e-07\\
0.349817584	1.04710074419824e-07\\
0.387253813	9.12175920507468e-08\\
0.387465405	8.01749937247511e-08\\
0.411445785	7.10231437935932e-08\\
0.432960668	6.33538273654467e-08\\
0.466032369	5.68632762858819e-08\\
0.483803531	5.13217256068615e-08\\
};
\addlegendentry{\texttt{g15}}

\end{axis}
\end{tikzpicture}
\caption{Errors of the numerical solutions with smooth initial data.\\
The dashed line indicates order 2.
}
\label{fig:sineKG_th16}
\end{figure}
\begin{figure}[htp]
\scalebox{.9}{
%
%
\definecolor{color_rs21}{rgb}{0.30100,0.74500,0.93300}%
\definecolor{color_lsz}{rgb}{0.85000,0.32500,0.09800}%
\definecolor{color_ss68}{rgb}{0.92900,0.69400,0.12500}%
\definecolor{color_d79}{rgb}{0.49400,0.18400,0.55600}%
\definecolor{color_g15}{rgb}{0.46600,0.67400,0.18800}%
\definecolor{color_hl21}{rgb}{0.08240,0.26270,0.37650}%
\begin{tikzpicture}

\begin{axis}[%
width=2.300in,
height=3.100in,
at={(1.011in,0.642in)},
scale only axis,
xmode=log,
xmin=0.0001,
xmax=0.1,
xminorticks=true,
xlabel style={font=\color{white!15!black}},
xlabel={$\tau$},
ymode=log,
ymin=1e-08,
ymax=1e3,
yminorticks=true,
ylabel style={font=\color{white!15!black}},
ylabel={{\small $H^1(\Omega) \times L^2(\Omega)$ error}},
axis background/.style={fill=white},
xmajorgrids,
ymajorgrids,
legend style={at={(0.985,0.625)}, anchor=south east, legend cell align=left, align=left, draw=white!15!black}
]

\addplot [color=color_hl21, mark=asterisk, mark options={solid, color_hl21,thick,scale=1.5}]
  table[row sep=crcr]{%
0.002	137.212811254545\\
0.001	137.182392380214\\
0.000666666666666667	136.026534310212\\
0.0005	136.508253996708\\
0.0004	134.323454880322\\
0.000333333333333333	134.245423970011\\
0.000285714285714286	133.632536784789\\
0.00025	134.951667674565\\
0.000222222222222222	135.113725278869\\
0.0002	134.587189765165\\
0.000181818181818182	134.252400771763\\
0.000166666666666667	133.094294210245\\
0.000153846153846154	134.611412061962\\
0.000142857142857143	133.430051530192\\
0.000133333333333333	133.58083062073\\
0.000125	131.748733870146\\
0.000117647058823529	132.119655092046\\
0.000111111111111111	134.267820900791\\
0.000105263157894737	131.978688051394\\
0.0001	131.907098263371\\
};
\addlegendentry{\texttt{hl21}}

\addplot [color=color_rs21, mark=o, mark options={solid, color_rs21,thick,scale=1.5}]
  table[row sep=crcr]{%
0.02	0.0101773196653171\\
0.01	0.00737025549665097\\
0.00666666666666667	0.00571855250193662\\
0.005	0.00636263193373351\\
0.004	0.0034517876864368\\
0.00333333333333333	0.00264079941695176\\
0.00285714285714286	0.00219101848074782\\
0.0025	0.00317681043086918\\
0.00222222222222222	0.00329425100468963\\
0.002	0.00146909331386035\\
0.00181818181818182	0.000871681930580708\\
0.00166666666666667	0.000633066542321822\\
0.00153846153846154	0.000973738658002708\\
0.00142857142857143	0.0010652667270076\\
0.00133333333333333	0.000735957416348494\\
0.00125	0.00133809772138613\\
0.00117647058823529	0.00054477221853179\\
0.00111111111111111	0.00121016538430448\\
0.00105263157894737	0.0010154854312548\\
0.001	0.000507284871555854\\
};
\addlegendentry{\texttt{rs21}}

\addplot [color=color_lsz, mark=star, mark options={solid, color_lsz,thick,scale=1.5}]
  table[row sep=crcr]{%
0.02	2.65805853187462e-05\\
0.01	6.79754365032311e-06\\
0.00666666666666667	3.06455065744109e-06\\
0.005	1.74629836269642e-06\\
0.004	1.11342507265609e-06\\
0.00333333333333333	7.70869734541034e-07\\
0.00285714285714286	5.68800344700601e-07\\
0.0025	4.38363995484262e-07\\
0.00222222222222222	3.44349979998181e-07\\
0.002	2.79700019329206e-07\\
0.00181818181818182	2.40594814235344e-07\\
0.00166666666666667	2.26704478487547e-07\\
0.00153846153846154	2.23363047560301e-07\\
0.00142857142857143	2.00472180746656e-07\\
0.00133333333333333	1.51996805233482e-07\\
0.00125	1.35225463583001e-07\\
0.00117647058823529	1.42542920372057e-07\\
0.00111111111111111	1.22717643446559e-07\\
0.00105263157894737	9.66084684856104e-08\\
0.001	7.69929296895122e-08\\
};
\addlegendentry{\texttt{c\_lie}}

\addplot [color=color_ss68, mark=square, mark options={solid, color_ss68,thick,scale=1.5}]
  table[row sep=crcr]{%
0.02	0.01017746722646\\
0.01	0.00737027399070917\\
0.00666666666666667	0.00571857833051722\\
0.005	0.00636262115640919\\
0.004	0.00345179430467914\\
0.00333333333333333	0.00264080363987067\\
0.00285714285714286	0.00219101705202179\\
0.0025	0.00317681039789276\\
0.00222222222222222	0.00329425246523131\\
0.002	0.00146909384918029\\
0.00181818181818182	0.000871681995392831\\
0.00166666666666667	0.000633066001190735\\
0.00153846153846154	0.000973738865702768\\
0.00142857142857143	0.00106526735329974\\
0.00133333333333333	0.000735957669848174\\
0.00125	0.00133809763870758\\
0.00117647058823529	0.000544771935743685\\
0.00111111111111111	0.00121016475406997\\
0.00105263157894737	0.00101548538959184\\
0.001	0.000507284795529641\\
};
\addlegendentry{\texttt{ss68}}

\addplot [color=color_d79, mark=x, mark options={solid, color_d79,thick,scale=1.5}]
  table[row sep=crcr]{%
0.02	0.0101772920298114\\
0.01	0.00737025271361784\\
0.00666666666666667	0.00571855210910112\\
0.005	0.00636263257224432\\
0.004	0.00345179054241238\\
0.00333333333333333	0.00264079921315185\\
0.00285714285714286	0.00219101997163739\\
0.0025	0.0031768108065212\\
0.00222222222222222	0.00329425083028516\\
0.002	0.0014690931948634\\
0.00181818181818182	0.000871683353990343\\
0.00166666666666667	0.000633068482858608\\
0.00153846153846154	0.000973737989416861\\
0.00142857142857143	0.0010652684427361\\
0.00133333333333333	0.00073596104963007\\
0.00125	0.00133810124433513\\
0.00117647058823529	0.000544773735306634\\
0.00111111111111111	0.00121016563348667\\
0.00105263157894737	0.00101548810481167\\
0.001	0.000507287585577348\\
};
\addlegendentry{\texttt{d79}}

\addplot [color=color_g15, mark=x, mark options={solid, color_g15,thick,scale=1.5}]
  table[row sep=crcr]{%
0.02	0.0699616739464838\\
0.01	0.0479169069522781\\
0.00666666666666667	0.0385883985735841\\
0.005	0.0326615885099093\\
0.004	0.0290930839024245\\
0.00333333333333333	0.0262433326540627\\
0.00285714285714286	0.0240020984592913\\
0.0025	0.0227607279793295\\
0.00222222222222222	0.0214623155186661\\
0.002	0.0202254829316616\\
0.00181818181818182	0.0188412750710067\\
0.00166666666666667	0.0179889992233129\\
0.00153846153846154	0.0170731579536167\\
0.00142857142857143	0.0165967633440258\\
0.00133333333333333	0.0158856972240363\\
0.00125	0.0153352046969824\\
0.00117647058823529	0.0147450598303862\\
0.00111111111111111	0.0142708458558187\\
0.00105263157894737	0.0137254897550043\\
0.001	0.0132480186198876\\
};
\addlegendentry{\texttt{g15}}

\addplot [color=black, dashdotted]
  table[row sep=crcr]{%
1.60e-01	  5.596933915718704e-01\\
1.95e-05    6.832194721336308e-05\\
};

\addplot [color=black, dashed]
  table[row sep=crcr]{%
0.16	1.701157460399757e-03\\
0.0001	6.64514632968655e-10\\
};

\end{axis}
\end{tikzpicture}
%
%
\definecolor{color_rs21}{rgb}{0.30100,0.74500,0.93300}%
\definecolor{color_lsz}{rgb}{0.85000,0.32500,0.09800}%
\definecolor{color_ss68}{rgb}{0.92900,0.69400,0.12500}%
\definecolor{color_d79}{rgb}{0.49400,0.18400,0.55600}%
\definecolor{color_g15}{rgb}{0.46600,0.67400,0.18800}%
\definecolor{color_hl21}{rgb}{0.08240,0.26270,0.37650}%
\begin{tikzpicture}

\begin{axis}[%
width=2.300in,
height=3.100in,
at={(1.011in,0.642in)},
scale only axis,
xmode=log,
xmin=0.01,
xmax=10,
xminorticks=true,
xlabel style={font=\color{white!15!black}},
xlabel={wall-clock time},
ymode=log,
ymin=1e-08,
ymax=1e3,
yminorticks=true,
ylabel style={font=\color{white!15!black}},
ylabel={{\small $H^1(\Omega) \times L^2(\Omega)$ error}},
axis background/.style={fill=white},
xmajorgrids,
ymajorgrids,
legend style={at={(0.65,0.03)}, anchor=south west, legend cell align=left, align=left, draw=white!15!black}
]

\addplot [color=color_hl21, mark=asterisk, mark options={solid, color_hl21,thick,scale=1.5}]
  table[row sep=crcr]{%
0.135386947	137.212811254545\\
0.260818653	137.182392380214\\
0.389982566	136.026534310212\\
0.518308531	136.508253996708\\
0.65134817	134.323454880322\\
0.796103721	134.245423970011\\
0.907718803	133.632536784789\\
1.042348147	134.951667674565\\
1.168175996	135.113725278869\\
1.29934286	134.587189765165\\
1.433967629	134.252400771763\\
1.556533189	133.094294210245\\
1.687514826	134.611412061962\\
1.841382258	133.430051530192\\
2.005947305	133.58083062073\\
2.085132534	131.748733870146\\
2.216541072	132.119655092046\\
2.348577531	134.267820900791\\
2.483788756	131.978688051394\\
2.603918126	131.907098263371\\
};
\addlegendentry{\texttt{hl21}}

\addplot [color=color_rs21, mark=o, mark options={solid, color_rs21,thick,scale=1.5}]
  table[row sep=crcr]{%
0.026553785	0.0101773196653171\\
0.046568363	0.00737025549665097\\
0.068526292	0.00571855250193662\\
0.089597681	0.00636263193373351\\
0.113134289	0.0034517876864368\\
0.135388053	0.00264079941695176\\
0.156061865	0.00219101848074782\\
0.178599873	0.00317681043086918\\
0.203626517	0.00329425100468963\\
0.22338778	0.00146909331386035\\
0.244491574	0.000871681930580708\\
0.264942098	0.000633066542321822\\
0.289643416	0.000973738658002708\\
0.313647241	0.0010652667270076\\
0.33349677	0.000735957416348494\\
0.36114893	0.00133809772138613\\
0.379513502	0.00054477221853179\\
0.397190259	0.00121016538430448\\
0.425170673	0.0010154854312548\\
0.446952444	0.000507284871555854\\
};
\addlegendentry{\texttt{rs21}}

\addplot [color=color_lsz, mark=star, mark options={solid, color_lsz,thick,scale=1.5}]
  table[row sep=crcr]{%
0.026077905	2.65805853187462e-05\\
0.055310989	6.79754365032311e-06\\
0.078895623	3.06455065744109e-06\\
0.102197289	1.74629836269642e-06\\
0.136956838	1.11342507265609e-06\\
0.151327121	7.70869734541034e-07\\
0.176564775	5.68800344700601e-07\\
0.200394176	4.38363995484262e-07\\
0.226132376	3.44349979998181e-07\\
0.254337925	2.79700019329206e-07\\
0.281079829	2.40594814235344e-07\\
0.302382155	2.26704478487547e-07\\
0.33225116	2.23363047560301e-07\\
0.353286415	2.00472180746656e-07\\
0.375957436	1.51996805233482e-07\\
0.405498155	1.35225463583001e-07\\
0.431469404	1.42542920372057e-07\\
0.459666318	1.22717643446559e-07\\
0.477592602	9.66084684856104e-08\\
0.504613626	7.69929296895122e-08\\
};
\addlegendentry{\texttt{c\_lie}}

\addplot [color=color_ss68, mark=square, mark options={solid, color_ss68,thick,scale=1.5}]
  table[row sep=crcr]{%
0.02207565	0.01017746722646\\
0.024994238	0.00737027399070917\\
0.037963878	0.00571857833051722\\
0.050692768	0.00636262115640919\\
0.061658787	0.00345179430467914\\
0.074737873	0.00264080363987067\\
0.087512087	0.00219101705202179\\
0.098456505	0.00317681039789276\\
0.110663831	0.00329425246523131\\
0.122527793	0.00146909384918029\\
0.137476061	0.000871681995392831\\
0.149846392	0.000633066001190735\\
0.159259143	0.000973738865702768\\
0.171352049	0.00106526735329974\\
0.182170879	0.000735957669848174\\
0.194185188	0.00133809763870758\\
0.209094901	0.000544771935743685\\
0.222468492	0.00121016475406997\\
0.234787066	0.00101548538959184\\
0.24458948	0.000507284795529641\\
};
\addlegendentry{\texttt{ss68}}

\addplot [color=color_d79, mark=x, mark options={solid, color_d79,thick,scale=1.5}]
  table[row sep=crcr]{%
0.033837766	0.0101772920298114\\
0.051654546	0.00737025271361784\\
0.075248728	0.00571855210910112\\
0.098418417	0.00636263257224432\\
0.124036825	0.00345179054241238\\
0.154409549	0.00264079921315185\\
0.1729442	0.00219101997163739\\
0.200709483	0.0031768108065212\\
0.220818817	0.00329425083028516\\
0.247324065	0.0014690931948634\\
0.271984328	0.000871683353990343\\
0.297110471	0.000633068482858608\\
0.324638823	0.000973737989416861\\
0.349033069	0.0010652684427361\\
0.374802979	0.00073596104963007\\
0.396466182	0.00133810124433513\\
0.420859712	0.000544773735306634\\
0.448270721	0.00121016563348667\\
0.472811531	0.00101548810481167\\
0.499720741	0.000507287585577348\\
};
\addlegendentry{\texttt{d79}}

\addplot [color=color_g15, mark=x, mark options={solid, color_g15,thick,scale=1.5}]
  table[row sep=crcr]{%
0.033074677	0.0699616739464838\\
0.050112709	0.0479169069522781\\
0.074724605	0.0385883985735841\\
0.100650134	0.0326615885099093\\
0.124689511	0.0290930839024245\\
0.150696627	0.0262433326540627\\
0.176379455	0.0240020984592913\\
0.199663869	0.0227607279793295\\
0.226610411	0.0214623155186661\\
0.245306955	0.0202254829316616\\
0.278171314	0.0188412750710067\\
0.300027474	0.0179889992233129\\
0.324284955	0.0170731579536167\\
0.348781956	0.0165967633440258\\
0.370579391	0.0158856972240363\\
0.395124893	0.0153352046969824\\
0.418732037	0.0147450598303862\\
0.447019947	0.0142708458558187\\
0.474432169	0.0137254897550043\\
0.499226623	0.0132480186198876\\
};
\addlegendentry{\texttt{g15}}

\end{axis}
\end{tikzpicture}
\caption{Errors of the numerical solutions with $H^1(\Omega)\times L^2(\Omega)$ initial data.\\
The dashed lines indicate orders 1 and 2, respectively.}
\label{fig:sineKG_th1}
\end{figure}
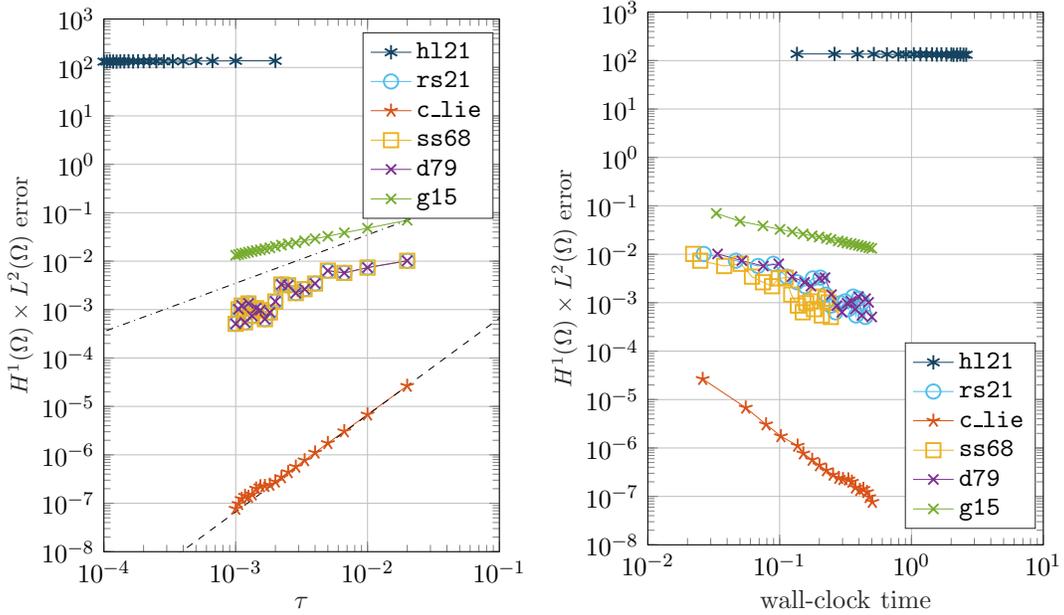
%

%
%
%

\section*{Funding}

The work of Buyang Li is partially supported by the Hong Kong Research Grants Council (General Research Fund, project no. 15300519).
Franco Zivcovich and Katharina Schratz have received funding from the European Research Council (ERC) under the European Union’s Horizon 2020 research and innovation programme (grant agreement No. 850941).


\begin{thebibliography}{10}

\bibitem{BFS2022}
Weizhu Bao, Yue Feng, and Chunmei Su:
Uniform error bounds of time-splitting spectral methods for the long-time
dynamics of the nonlinear Klein--Gordon equation with weak nonlinearity.
{\it Math. Comp.} 91 (2022), pp. 811--842.

\bibitem{BY2007}
Weizhu Bao and Li Yang:
Efficient and accurate numerical methods for the Klein–Gordon–Schrödinger equations.
{\it J. Comput. Phys.} 225 (2007), pp. 1863--1893.

\bibitem{BEM1971}
A. Barone, F. Esposito, C. J. Magee:
Theory and applications of the sine-gordon equation.
{\it La Rivista del Nuovo Cimento} 1 (1971), pp. 227--267 (1971).
\url{https://doi.org/10.1007/BF02820622}


\bibitem{BS}
Yvain Bruned and Katharina Schratz:
Resonance based schemes for dispersive equations via decorated trees.
To appear in {\it Forum of Mathematics, Pi}, \url{http://arxiv.org/abs/2005.01649}.

\bibitem{BDH2021}
Simone Buchholz, Benjamin D\"orich, and Marlis Hochbruck:
On averaged exponential integrators for semilinear Klein--Gordon equations with solutions of low-regularity. {\it SN Partial Differ. Equ. Appl.} 2 (2021), pp. 2662--2963.


\bibitem{CLZ2021}
Waixiang Cao, Dongfang Li, and Zhimin Zhang:
Unconditionally optimal convergence of an energy-conserving and linearly implicit scheme for nonlinear Klein--Gordon equations. {\it Sci. China Math.} (2021).
\url{https://doi.org/10.1007/s11425-020-1857-5}

\bibitem{CHSS2020}
Chuchu Chen, Jialin Hong, Chol Sim, and Kwang Sonwu:
Energy and quadratic invariants preserving (EQUIP) multi-symplectic methods for Hamiltonian Klein--Gordon equations.
{\it J. Comput. Phys.} 418 (2020), article 10959.

\bibitem{CHL2008}
David Cohen, Ernst Hairer, and Christian Lubich.
Conservation of energy, momentum and actions
in numerical discretizations of non-linear wave
equations. {\it Numer. Math.} 110 (2008), pp. 113--143.

\bibitem{D79}
Peter Deuflhard:
A study of extrapolation methods based on multistep schemes without parasitic solutions.
{\it Z. Angew. Math. Phys.} 30 (1979), pp. 177--189.

\bibitem{HO2016}
Eskil Hansen and Alexander Ostermann:
High-order splitting schemes for semilinear evolution equations.
{\it BIT Numer. Math.} 56 (2016), pp. , 1303--1316.

\bibitem{G15}
Ludwig Gauckler: Error analysis of trigonometric integrators for semilinear Klein--Gordon equations.
{\it SIAM J. Numer. Anal.} 53, 2, (2015), pp. 1082-1106

\bibitem{GASSS98}
Bosco Garc\'ia-Archilla, Jesús María Sanz-Serna, and Robert D. Skeel:
Long-time-step methods for oscillatory differential equations.
{\it SIAM J. Sci. Comput.} 20 (1998), pp. 930-963.

\bibitem{GH06}
Volker Grimm and Marlis Hochbruck:
Error analysis of exponential integrators for oscillatory second-order differential equations.
{\it J. Phys. A} 39 (2006), pp. 5495--5507.

\bibitem{HL00}
Ernst Hairer and Christian Lubich:
Long-time energy conservation of numerical methods for oscillatory differential equations.
{\it SIAM J. Numer. Anal.} 38 (2000), pp. 414--441.

\bibitem{HLW}
Ernst Hairer, Christian Lubich, and Gerhard Wanner:
{\it Geometric Numerical Integration. Structure-Preserving Algorithms for Ordinary Differential Equations}. Second Edition. Springer 2006.

\bibitem{HL2021}
Marlis Hochbruck and Jan Leibold:
An implicit-explicit time discretization scheme for second-order semilinear Klein--Gordon equations with application to dynamic boundary conditions. {\it Numer. Math.} 147 (2021), pp. 869--899.

\bibitem{HL1999}
Marlis Hochbruck and Christian Lubich:
A Gautschi-type method for oscillatory second-order differential equations. {\it Numer. Math.} 83 (1999), pp. 403--426.


\bibitem{Hofmanova-Schratz-2017}
Martina Hofmanov\'a and Katharina Schratz:
An exponential-type integrator for the KdV equation.
{\it Numer. Math.} 136 (2017), pp. 1117--1137.

\bibitem{LLT2016}
William Layton, Yong Li, and Catalin Trenchea: Recent developments in IMEX methods with time filters for systems of evolution equations. {\it J. Comput. Appl. Math.} 299 (2016), pp. 50--67.


\bibitem{LMS}
Buyang Li, Shu Ma, and Katharina Schratz:
A semi-implicit low-regularity integrator for Navier--Stokes equations.
preprint \url{http://arxiv.org/abs/2107.13427}

\bibitem{LS2020}
Dongfang Li and Weiwei Sun: Linearly implicit and high-order energy-conserving schemes for nonlinear Klein--Gordon equations. {\it J. Sci. Comput.} 83 (2020), article 65.

\bibitem{LV2006}
Jichun Li and Miguel R. Visbal: High-order compact schemes for nonlinear dispersive waves.
{\it J. Sci. Comput.} 26 (2006), pp. 1--23.


\bibitem{MK2011}
D. Murai and T. Koto: Stability and convergence of staggered Runge--Kutta schemes for semilinear Klein--Gordon equations. J. Comput. Appl. Math. 235 (2011), pp. 4251--4264.

\bibitem{OS18}
Alexander Ostermann and Katharina Schratz:
Low regularity exponential-type integrators for semilinear Schr\"odinger equations.
{\it Found. Comput. Math.} 18 (2018), pp. 731--755.

\bibitem{ORS20}
Alexander Ostermann, Frédéric Rousset, and Katharina Schratz:
Fourier integrator for periodic NLS: low regularity estimates via discrete Bourgain spaces.
To appear in {\it J. Eur. Math. Soc.}

\bibitem{QW2019}
Ruisheng Qi and Xiaojie Wang:
Error estimates of finite element method for semilinear stochastic strongly damped Klein--Gordon equation.
{\it IMA J. Numer. Anal.} 39 (2019) 39, pp. 1594--1626.

\bibitem{RS21}
Frédéric Rousset and Katharina Schratz:
A general framework of low-regularity integrators.
{\it SIAM J. Numer. Anal.} 59  (2021), pp. 1735--1768.

\bibitem{S1968}
Gilbert Strang:
On the construction and comparison of difference schemes.
{\it SIAM J. Numer. Anal.} 5.3 (1968), pp. 506-517.

\bibitem{WW2019}
Bin Wang and Xinyuan Wu:
Global error bounds of one-stage extended RKN integrators for semilinear Klein--Gordon equations.
{\it Numerical Algorithms} 81 (2019), pp. 1203--1218.

\bibitem{Li-Wu-Yao-2021}
Yongsheng Li, Yifei Wu, and Fangyan Yao:
Convergence of an embedded exponential-type low-regularity integrators for the KdV equation without loss of regularity. {\it Ann. Appl. Math.} 37 (2021), pp. 1--21.

\bibitem{Wu-Zhao-IMA}
Yifei Wu and Xiaofei Zhao:
Optimal convergence of a first order low-regularity integrator for the KdV equation.
{\it IMA J. Numer. Anal.} (2021), DOI: 10.1093/imanum/drab054

\bibitem{Wu-Zhao-BIT}
Yifei Wu and Xiaofei Zhao:
Embedded exponential-type low-regularity integrators for KdV equation under rough data.
{\it BIT Numer. Math.} (2021), DOI: 10.1007/s10543-021-00895-8

\end{thebibliography}

\end{document}